\newcommand{\IC}{\mathbb{C}}
\newcommand{\set}[1]{\left\{#1\right\}}
\newtheorem{proposition}{Proposition}[section]
\newtheorem{lemma}[proposition]{Lemma}
\newtheorem{theorem}[proposition]{Theorem}
\newtheorem{corollary}[proposition]{Corollary}
\theoremstyle{definition}
\newtheorem{definition}[proposition]{Definition}
\theoremstyle{remark}
\newtheorem{questions}{Questions}
\newtheorem{example}[proposition]{Example}
\newtheorem{remark}[proposition]{Remark}
\numberwithin{equation}{section}
\def\J#1#2#3{ \left\{ #1,#2,#3 \right\} }
\begin{document}

\title{Derivations on ternary rings of operators}
\author{Robert Pluta}
\address{Department of Mathematics, University of California, Irvine, CA 92697-3875, USA}
\email{plutar@tcd.ie}
\author{Bernard Russo}
\address{Department of Mathematics, University of California, Irvine, CA 92697-3875, USA}
\email{brusso@math.uci.edu}

\date{\today}
\keywords{C*-algebra, ternary ring of operators, TRO, derivation, linking algebra, W*-TRO}

\begin{abstract}
To each projection $p$ in a $C^*$-algebra $A$
we associate a family of derivations on $A$, called {\em $p$-derivations},
and relate them to the space of triple derivations on $p A (1-p)$.
We then show that every derivation  on a ternary ring of operators is spatial and we investigate
whether every such derivation on a weakly closed ternary ring of operators is inner.

%
\end{abstract}

\maketitle
\section{$S$-derivations on C*-algebras}

If $A$ is a  $C^*$-algebra,
we let $D(A)$ denote the Banach Lie algebra of derivations on $A$.
To be more precise $D(A)$
consists of all operators $\delta \in B(A)$ that satisfy
$\delta(xy) = \delta(x)y + x\delta(x)$
for every $x, y$~in~$A$.  $B(A)$ denotes the bounded linear operators on $A$.

A~derivation~$\delta \in  D(A)$ is called
self-adjoint if $\delta = \delta^*$,
where $\delta^*$ is the derivation  defined by
$\delta^*(x) = \delta(x^*)^*$ for every $x$ in $A$.
The space of all self-adjoint derivations on $A$
is a real Banach Lie subalgebra of $ D(A)$ and is denoted $D^*(A)$.

Derivations on $C^*$-algebras
have suitable counterparts in
a more general setting of ternary rings of operators, or TROs for short,
where they are sometimes termed triple derivations. However, in this paper we shall use the term {\it triple derivation} to denote a derivation of a Jordan triple system. For example, 
if $X$ is a Banach subspace of a $C^*$-algebra
and $x y^*z+zy^*x \in X$ for every $x, y, z$ in $X$,
then $X$ is called a JC$^*$-triple
and a {\em triple derivation} on $X$ is an operator $\tau \in  B(X)$
satisfying
$$\tau(\{xy^*z\}) = \{\tau(x)y^*z\}  + \{ x\tau(y)^*z \} + \{ xy^*\tau(z)\}$$
for every $x, y, z$ in $X$, where $\{xyz\}=(xy^*z+zy^*x)/2$.

We shall use the term TRO-derivation, as follows: If $X$ is a Banach subspace of a $C^*$-algebra
and $x y^*z \in X$ for every $x, y, z$ in $X$,
then $X$ is called a TRO
and a {\em TRO-derivation} on $X$ is an operator $\tau \in  B(X)$
satisfying
$$\tau(xy^*z) = \tau(x)y^*z  +  x\tau(y)^*z  +  xy^*\tau(z)$$
for every $x, y, z$ in $X$.


It is clear that a TRO (resp.\ JC$^*$-triple) can also be defined as a Banach subspace of $B(H,K)$, the bounded operators from Hilbert space $H$ to Hilbert space $K$, which is closed under the triple product $xy^*z$ (resp.\ $(xy^*z+zy^*x)/2$).  If a TRO is weakly closed, it is called a W$^*$-TRO.

In this section
we will introduce the class of $S$-derivations on a $C^*$-algebra $A$
associated with a subspace  $S\subseteq A$.
Of particular interest will be the case $S = pAp$ for a projection $p$ in $A$. 
We  will seek to determine 
the relationship between 
the class of $pAp$ derivations (which we call $p$-derivations for short) on $A$ 
and the class of TRO-derivations on $pA(1-p)$.

\begin{definition}
Let $A$ be a $C^*$-algebra and let $S$  be a  subspace of~$A$.
We~say~that a derivation $\delta \in  D(A)$
is  {\em associated  with} $S$, or simply that $\delta$ is an {\em $S$-derivation},
if $\delta$  leaves $S$   invariant   in the sense that $\delta(S) \subseteq S$.
\end{definition}

We use $ D_S(A)$ to denote the set of all $S$-derivations.
In order to simplify the notation,
we write $ D_e(A)$ for $ D_{eAe}(A)$
in case $S = eAe$, for some idempotent $e\in A$,  
and we abuse the terminology slightly
by referring to the elements of $ D_e(A)$
simply as {\em $e$-derivations}.

\medskip

To repeat, 
given an arbitrary idempotent $e$ in a $C^*$-algebra $A$,  
which in particular may be a projection,  
by an $e$-derivation on $A$ we mean a derivation $\delta \in  D(A)$ satisfying 
$\delta (eAe) \subseteq eAe$. 
This condition is easily seen to be equivalent to
the requirement that $\delta(e) = 0$.

\medskip


\begin{example}
Let $A$ be a $C^*$-algebra and let $e\in A$ be~an~idempotent.
Fix $a \in eAe$ and $b \in (1-e)A(1-e) = \set{x - xe - ex + exe \,:\, x \in A}$.
Then $\delta \colon A \to A$  defined by
$\delta(x) = (a + b)x - x(a + b)$
is an $e$-derivation.
\end{example}

\begin{lemma}
Let $A$ be a $C^*$-algebra and let $S$ be a subalgebra 
with an identity element $1_S$
(possibly different from the identity~element~of~$A$ if $A$ is unital).
Let $\delta \in  D(A)$ be a derivation.
The following statements~hold.
\begin{enumerate}
\item If  $\delta(S) \subseteq S $ then $\delta(1_S) = 0$.
\item If  $\delta(1_S) = 0 $ then $\delta(S) \subseteq 1_S A 1_S$.
\end{enumerate}
\end{lemma}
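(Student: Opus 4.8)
The plan is to exploit the defining Leibniz identity of a derivation together with the idempotency $1_S^2 = 1_S$. Both parts should follow from examining how $\delta$ interacts with the identity element of the subalgebra $S$.

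For part (1), suppose $\delta(S) \subseteq S$. First I would apply $\delta$ to the equation $1_S = 1_S \cdot 1_S$, obtaining $\delta(1_S) = \delta(1_S)\,1_S + 1_S\,\delta(1_S)$ by the Leibniz rule. Write $d = \delta(1_S)$; the hypothesis gives $d \in S$, and the relation reads $d = d\,1_S + 1_S\,d$. Since $1_S$ is a two-sided identity \emph{for the subalgebra} $S$ and $d \in S$, we have $d\,1_S = d$ and $1_S\,d = d$, so the equation collapses to $d = d + d$, forcing $d = 0$. This is the short, clean direction; the only subtlety to flag is that $1_S$ need not be the identity of $A$, so I must be careful to use $d \in S$ before invoking the identity property.

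For part (2), assume $\delta(1_S) = 0$. The goal is to show $\delta(s) \in 1_S A 1_S$ for each $s \in S$, i.e.\ that $\delta(s)$ is fixed by left and right multiplication by $1_S$. The key step is to differentiate the two trivial identities $s = 1_S\, s$ and $s = s\, 1_S$, which hold because $1_S$ is the identity of $S$ and $s \in S$. Differentiating $s = 1_S s$ gives $\delta(s) = \delta(1_S)\,s + 1_S\,\delta(s) = 1_S\,\delta(s)$, using $\delta(1_S)=0$. Symmetrically, differentiating $s = s\,1_S$ gives $\delta(s) = \delta(s)\,1_S$. Hence $1_S\,\delta(s) = \delta(s) = \delta(s)\,1_S$, and therefore $\delta(s) = 1_S\,\delta(s)\,1_S \in 1_S A 1_S$.

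I do not anticipate a genuine obstacle in either part; the argument is essentially a bookkeeping exercise with the Leibniz rule. The one place demanding care — and the closest thing to a subtlety — is keeping straight that $1_S$ acts as an identity only on elements of $S$, never assuming it is the unit of $A$. In part (1) this means recording $\delta(1_S) \in S$ \emph{first}, and in part (2) it means only differentiating relations whose factors genuinely lie in $S$. With that caveat observed, both conclusions drop out immediately from one application of the derivation identity.
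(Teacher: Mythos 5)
Your proof is correct and is precisely the argument the paper has in mind: the paper's own proof is just the remark that the lemma is ``a straightforward consequence of the derivation property,'' and your write-up supplies exactly that — applying the Leibniz rule to $1_S = 1_S\cdot 1_S$ for part (1) and to $s = 1_S s = s\,1_S$ for part (2), with the appropriate care that $1_S$ is only an identity on $S$. Nothing further is needed.
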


\begin{proof}
A straightforward consequence of the derivation property.
\end{proof}

\begin{lemma}
Let $A$ be a $C^*$-algebra and let $e\in A$ be an idempotent.
Let $\delta \in  D(A)$ be a derivation.
The following statements hold.
\begin{enumerate}
\item[$(1)$] If  $\delta(e) = 0$,
then $\delta$ leaves invariant the following subspaces
$$ eAe, \quad   eA(1-e), \quad   (1-e)Ae,   \quad  (1-e)A(1-e).$$
\item[$(2)$] If $\delta$  leaves invariant  $eAe$ or $(1-e)A(1-e)$, then $\delta(e) = 0$. 
\end{enumerate}
Additionally, let $\delta = \delta^*$ and $e = e^*$. 
Then the following statement~holds. 
\begin{enumerate}
\item[$(3)$] If $\delta$  leaves invariant  $eA(1-e)$  or $(1-e)Ae$, then $\delta(e) = 0$. 
\end{enumerate}
\end{lemma}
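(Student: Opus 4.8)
The plan is to prove both cases at once up to the obvious left-right reflection, so I would focus on the hypothesis that $\delta$ leaves $eA(1-e)$ invariant and recover the case of $(1-e)Ae$ symmetrically (there one uses the relation $y=ye$ in place of $x=ex$ below). Write $d=\delta(e)$. Two structural facts about $d$ are available before the invariance hypothesis is even used. Differentiating $e=e^2$ gives $d=de+ed$; multiplying this identity by $e$ on both sides forces $ede=0$, and sandwiching it between copies of $1-e$ (using $e(1-e)=(1-e)e=0$) forces $(1-e)d(1-e)=0$. Moreover, since $\delta=\delta^*$ and $e=e^*$ we have $d=\delta(e)=\delta^*(e)=\delta(e^*)^*=\delta(e)^*=d^*$, so $d$ is self-adjoint. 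Combining these, only the two off-diagonal Peirce components of $d$ survive, and $d=v+v^*$ with $v=ed(1-e)\in eA(1-e)$ and $v^*=(1-e)de\in(1-e)Ae$.

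Next I would feed in the invariance hypothesis through the single relation $x=ex$, valid for every $x\in eA(1-e)$. Applying $\delta$ yields $\delta(x)=dx+e\delta(x)$; but $\delta(x)\in eA(1-e)$ by hypothesis, so $e\delta(x)=\delta(x)$ and the identity collapses to $dx=0$ for all $x\in eA(1-e)$. Writing a general such $x$ as $ea(1-e)$ with $a\in A$, and noting $de=v^*$ (because $ve=0$ and $v^*e=v^*$), this says precisely that $v^*\,a\,(1-e)=0$ for every $a\in A$.

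The punchline is a positivity argument. Specializing to $a=v$, and using $v(1-e)=v$, gives $v^*v=0$, whence $\norm{v}^2=\norm{v^*v}=0$ and therefore $v=0$; consequently $d=v+v^*=0$, that is, $\delta(e)=0$. I would point out that this argument never invokes a unit of $A$: the symbol $1-e$ occurs only inside products such as $ed(1-e)$ and $a(1-e)$, each of which lies in $A$, so the proof applies verbatim in the non-unital case.

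The only genuine content lies in the last two steps, and it is exactly here that the extra hypotheses $\delta=\delta^*$ and $e=e^*$ are indispensable. Invariance of an off-diagonal corner yields merely the one-sided annihilation $dx=0$, which on its own is strictly weaker than $d=0$ (one-sided annihilators in a $C^*$-algebra need not vanish). Self-adjointness of $d$ is what converts this one-sided information into the two-sided, positive statement $v^*v=0$, at which point the $C^*$-identity $\norm{v^*v}=\norm{v}^2$ finishes the job. Thus the crux is conceptual rather than computational: recognizing that the right object to annihilate is $v^*v$, so that the $C^*$-norm condition forces $v=0$.
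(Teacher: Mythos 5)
Your proof of part (3) is correct, but it takes a genuinely different route from the paper's. The paper also begins with the Peirce decomposition $\delta(e)=e\delta(e)(1-e)+(1-e)\delta(e)e$, but then observes that $e\delta(e)$ and $\delta(e)(1-e)$ are themselves elements of the invariant corner $eA(1-e)$, applies $\delta$ a \emph{second} time to these two elements, and reads off the entries of the resulting $2\times 2$ matrices to conclude $\delta(e)^2=0$; self-adjointness of $\delta(e)$ then forces $\delta(e)=0$. You instead use the invariance hypothesis only to first order: applying $\delta$ once to the relation $x=ex$ yields the annihilation identity $\delta(e)x=0$ for \emph{every} $x\in eA(1-e)$, after which the single specialization $x$ built from $v=e\delta(e)(1-e)$ gives $v^*v=0$ and hence $v=0$. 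Your argument is shorter, avoids the second application of $\delta$ and the matrix bookkeeping entirely, and you make explicit that it needs no unit (the paper writes $A$ in $2\times2$ form without comment). Both proofs terminate in the same $C^*$-positivity step, and your diagnosis of where $\delta=\delta^*$ and $e=e^*$ are indispensable is exactly right --- it is what makes the two off-diagonal components of $\delta(e)$ adjoints of each other, so that killing one kills both; indeed $\delta=\operatorname{ad}(t)$ with $t$ strictly upper triangular in $M_2(\mathbb{C})$, $e=\left(\begin{smallmatrix}1&0\\0&0\end{smallmatrix}\right)$, shows (3) fails without self-adjointness. One small accounting point: like the paper, you leave parts (1) and (2) unaddressed; the paper dismisses them as straightforward consequences of the derivation property, so this matches its treatment, but a complete write-up should at least say that much.
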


\begin{proof}
The assertions (1) and (2) are straightforward 
consequences of the derivation property. 
To prove (3),  
assume that $eA(1-e)$ is invariant for $\delta = \delta^*$, and $e = e^*$.
Since $\delta(e) ~ = ~ \delta(e)e + e \delta(e)$, we have $e \delta(e)e = 0$ 
and hence 
\begin{equation}
\delta(e) ~ = ~ e \delta(e) (1 - e) + (1 - e) \delta(e) e.  
\nonumber  
\end{equation}   
This shows that both $e\delta(e)$ and $\delta(e) (1-e)$  are  equal to $e \delta(e) (1 - e)$, 
and so both $e\delta(e)$ and $\delta(e) (1-e)$ are elements of the subspace $eA(1-e)$  
which is invariant  under  $\delta$.

We will show that $\delta(e) = 0$ by showing that $\delta(e)^2 = 0$.  
For this, we identify $A$ with 
$  
\left(\begin{smallmatrix}
eAe     &  eA(1-e)  \\
(1-e)Ae &  (1-e)A(1-e) 
\end{smallmatrix}\right)
$
and write $\delta(e)$ and $\delta^2(e)$ as
$$  
\delta(e) = 
\left(\begin{smallmatrix}
0 & e \delta(e) (1 - e)  \\
(1 - e) \delta(e) e & 0
\end{smallmatrix}\right), 
\qquad \quad 
\delta^2(e) = 
\left(\begin{smallmatrix}
a & b  \\
c & d 
\end{smallmatrix}\right). 
$$
Then    
$
\delta(e)^2 
= 
\left(
\begin{smallmatrix}
e\delta(e)(1 - e) \delta(e) e  &  0  \\
0 & (1-e)\delta(e) e \delta(e) (1 - e) 
\end{smallmatrix}
\right)
$ 
and since 
\begin{eqnarray*}
\delta( e \delta(e) ) & = & 
\left(\begin{smallmatrix}
e\delta(e)(1 - e) \delta(e) e +  ea  &  eb  \\
0 & (1-e)\delta(e) e \delta(e) (1 - e) 
\end{smallmatrix}\right) 
~~ \in  ~~  
\left(\begin{smallmatrix}
0 & eA(1-e)  \\
0 & 0 
\end{smallmatrix}\right),   \\ 
\delta(  \delta(e) (1-e)  ) & = &   
\left(\begin{smallmatrix}
- e\delta(e)(1 - e) \delta(e) e  & b(1-e) \\
0 & d(1-e) - (1-e)\delta(e) e \delta(e) (1 - e) 
\end{smallmatrix}\right) 
~~ \in  ~~  
\left(\begin{smallmatrix}
0 & eA(1-e)  \\
0 & 0 
\end{smallmatrix}\right),  
\end{eqnarray*}
it follows that 
$(1-e)\delta(e) e \delta(e) (1 - e)  = 0 =  e\delta(e)(1 - e) \delta(e)e$. 
Thus $\delta(e)^2 = 0$, as desired. 
\end{proof}

If $A$ is a $C^*$-algebra and $p\in A$ is a projection, 
we let $ D^*_p(A)$
denote the (real) Banach Lie algebra of self-adjoint $p$-derivations on $A$. 
To be more precise $ D^*_p(A)$ consists of all  derivations $\delta \in   D(A)$ 
that satisfy   $\delta(p) = 0$ and $\delta = \delta^*$. 
If $X$ is a TRO, we use  $ D_{TRO}(X)$
to denote the (real) Banach Lie algebra of all TRO-derivations on $X$.

\begin{remark}\label{rem:0125161}
Let $A$ be a unital $C^*$-algebra and let $p\in A$ be a projection. 
Then the map $$  \Delta \colon   D^*_p(A)   \to   D_{TRO}(pA(1-p)), \qquad \Delta(\delta) = \delta_{|pA(1-p)}$$
is a homomorphism of Banach Lie algebras. 
\end{remark}

\begin{example}
\noindent Let $A = M_2(\IC)$,  
$p = 
\left(\begin{smallmatrix}
1  & 0 \\
0 & 0 
\end{smallmatrix}\right). 
$ 
The set of all $p$-derivations on $A$ is:
\begin{eqnarray*}
 D_p(A) & = &  \set{ \delta \in  D(A)  \,:\,    \delta(p) = 0  } \\ 
& \simeq &
\set{  
\left(
\begin{smallmatrix}
\alpha   &  0 \\
0 & \beta   
\end{smallmatrix}
\right) 
\,:\, \alpha, \beta  \in \IC}  \\
 & = &  \text{a complex  Banach Lie algebra.} 
\end{eqnarray*}

\bigskip

\noindent  The set of all self-adjoint $p$-derivations  is:  
\begin{eqnarray*}
 D^*_p(A) & = &  \set{ \delta \in  D_p(A)  \,:\,    \delta  =  \delta^*  } \\ 
& \simeq &
\set{  
\left(
\begin{smallmatrix}
\alpha   &  0 \\
0 & \beta   
\end{smallmatrix}
\right) 
\,:\, \alpha, \beta  \in \IC \text{ with }  \Re(\alpha)  =  \Re(\beta)        }   \\
 & = &  \text{a real Banach Lie algebra.}  
\end{eqnarray*} 

\bigskip

\noindent 
\noindent The mapping 
$$ \Delta \colon  D^*_p(A) \to  D_{TRO}(X), \qquad \Delta(\delta) = \delta_{|X} $$
defines a linear surjection between the self-adjoint $p$-derivations on $A$ 
and the TRO-derivations on  
$X = pA(1-p) =  
\left(
\begin{smallmatrix}
0  &  \IC \\
0 & 0   
\end{smallmatrix}
\right)$ (see Lemma~\ref{lem:0125161}).  
The   kernel of $\Delta$ is isomorphic to the center of $A$, i.e., 
$$\ker \Delta = Z(A)  =   
\set{  
\left(
\begin{smallmatrix}
\alpha   &  0 \\
0 & \alpha    
\end{smallmatrix}
\right) 
\,:\, \alpha  \in \IC}. $$
In other words, the TRO-derivations on 
$X = pA(1-p) =  
\left(
\begin{smallmatrix}
0  &  \IC \\
0 & 0   
\end{smallmatrix}
\right) $ 
are precisely the   
self-adjoint $p$-derivations on 
the linking algebra 
$
\left(
\begin{smallmatrix}
XX^*  &  X \\
X^* & X^*X   
\end{smallmatrix}
\right)
= A = M_2(\IC)$.  
\end{example}

\begin{example}
\noindent Let $A = M_5(\IC)$,
and let $p\in A$ be
the projection matrix
with 1 in the $(1, 1)$ and $(2, 2)$ position and zero's elsewhere.
The set of all $p$-derivations on $A$ is:
\begin{eqnarray*}
 D_p(A) & = &  \set{ \delta \in  D(A)  \,:\,    \delta(p) = 0  } \\
& \simeq &
\set{
\left(
\begin{smallmatrix}
A &  0 \\
0 & B
\end{smallmatrix}
\right)
\,:\, A\in M_2(\IC), B\in M_3(\IC)}  \\
 & = &  \text{a complex  Banach Lie algebra.}
\end{eqnarray*}
The set of all self-adjoint $p$-derivations  is
$ D^*_p(A)  =   \set{ \delta \in  D_p(A)  \,:\,    \delta  =  \delta^*  } $
and it can be identified with the real Banach Lie algebra
consisting of all matrices of the form
$\left(
\begin{smallmatrix}
A  &  0 \\
0 & B
\end{smallmatrix}
\right)
$  where
$A \in M_2(\IC)$, $B\in M_3(\IC)$,
and
$
\left(
\begin{smallmatrix}
A + A^*  &  0 \\
0 & B + B^*
\end{smallmatrix}
\right)
$
is in the  center of $A$.
\end{example}

\section{Derivations on TROs}

If $A$ is a unital C*-algebra and $e$ is a projection in $A$, then $X:=eA(1-e)$ is a TRO.  Conversely if $X\subset B(K,H)$ is a TRO, then with $X^*=\{x^*:x\in X\}\subset B(H,K)$, $XX^*=\hbox{span}\, \{ xy^*:x,y\in X\}\subset B(H)$, $X^*X=\hbox{span}\, \{z^*w: z,w\in X\}\subset B(K)$, $K_l(X)=\overline{XX^*}^n $, $K_r(X)=\overline{X^*X}^n$, we let\footnote{If $K_l(X)$ and $K_r(X)$ are unital subalgebras of $B(H)$ and $B(K)$ (resp.), and $X$ is nondegenerate, that is, $XX^*X$ is dense in $X$, then we take $A_X$ to be $\left[\begin{smallmatrix}
K_l(X)&X\\
X^*&K_r(X)
\end{smallmatrix}\right]$}
\[
A_X=\left[\begin{matrix}
K_l(X)+\IC 1_H&X\\
X^*&K_r(X)+\IC 1_K
\end{matrix}\right]\subset B(H\oplus K)
\]
denote the (unital) linking C*-algebra of $X$.  Then we have a TRO-isomorphism $X\simeq eA_X(1-e)$, where $e=\left[\begin{smallmatrix} 1&0\\ 0&0\end{smallmatrix}\right]$.

\begin{lemma}\label{lem:0125161}
Let $X$ be a TRO and let $D:X\rightarrow X$ be a TRO-derivation of $X$.  If $A_0=\left(\begin{smallmatrix}
XX^* & X  \\
X^* & X^*X
\end{smallmatrix}\right)$, then the map $\delta_0:A_0\rightarrow A_0$ given by
\[
\left(\begin{matrix}
\sum_i  x_iy_i^* & x  \\
y^* & \sum_j z_j^*w_j
\end{matrix}\right)\mapsto \left(\begin{matrix}
\sum_i(x_i(Dy_i)^*+(Dx_i)y_i^*) & Dx  \\
(Dy)^* & \sum_j(z_j^*(Dw_j)+(Dz_j)^*w_j)
\end{matrix}\right)
\]
is well defined and a bounded  *-derivation of $A_0$, which extends $D$ (when $X$ is embedded in $A_X$ via $x\mapsto  \left(\begin{smallmatrix}
0& x  \\
0& 0
\end{smallmatrix}\right)$), and which itself extends to a *-derivation $\delta$ of $A_X$.
Thus,  the Lie algebra homomorphism $\Delta: \delta\mapsto \delta_{|X}$ given in Remark~\ref{rem:0125161} is onto.

\end{lemma}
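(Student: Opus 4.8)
The plan is to establish the four asserted properties of $\delta_0$ in turn --- well-definedness, the $*$-derivation identity, boundedness, and the extension to $A_X$ --- and then read off surjectivity of $\Delta$. Well-definedness is the conceptual heart, since the only ambiguity lies in the diagonal corners: an element of $XX^*$ may have several representations $\sum_i x_iy_i^*$. It suffices to show that $\sum_i x_iy_i^*=0$ forces $W:=\sum_i\bigl((Dx_i)y_i^*+x_i(Dy_i)^*\bigr)=0$, and symmetrically for $X^*X$. I would fix $z\in X$ and apply $D$ to the identity $\sum_i x_iy_i^*z=0$; the TRO-derivation rule expands $0=D\bigl(\sum_i x_iy_i^*z\bigr)$ as $Wz+\bigl(\sum_i x_iy_i^*\bigr)(Dz)=Wz$, so $WX=0$. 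Since $W\in XX^*$ and $XX^*$ is self-adjoint, $W^*\in XX^*$, whence $WW^*\in (WX)X^*=\{0\}$ and $W=0$. The $X^*X$ corner is handled identically, now multiplying the relation $\sum_j z_j^*w_j=0$ on the \emph{left} by an element of $X$.

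For the $*$-derivation identity I would verify the Leibniz rule only on generating products, as $A_0$ is linearly spanned by its four corners and each corner is spanned by products of elements of $X$ and $X^*$. The three relevant products $x\cdot y^*\in XX^*$, $y^*\cdot x\in X^*X$, and $(xy^*)\cdot z\in X$ --- the last being exactly the defining identity of the TRO-derivation $D$ --- show that $\delta_0$ is a derivation on generators, and linearity propagates this to all of $A_0$. That $\delta_0(a^*)=\delta_0(a)^*$ is then a direct comparison of the two block formulas.

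Boundedness is the main technical obstacle, since the corner $XX^*$ need not be norm closed and so $\|\delta_0(a)\|$ is not a priori controlled by $\|a\|$. The device I would use is the commutator identity $L_{\delta_0(a)}=[D,L_a]$ of operators on $X$, where $L_a$ is left multiplication by $a\in XX^*$; indeed the well-definedness computation shows $\delta_0(a)z=D(az)-a\,Dz$ for every $z\in X$, giving $\|L_{\delta_0(a)}\|_{B(X)}\le 2\|D\|\,\|a\|$. It then remains to recover $\|\delta_0(a)\|$ from $\|L_{\delta_0(a)}\|_{B(X)}$, and here I would use that $X$ is a right Hilbert $K_r(X)$-module under $\langle z,w\rangle=z^*w$, whose module norm agrees with the operator norm, and that $a\mapsto L_a$ is a \emph{faithful}, hence isometric, representation of $K_l(X)=\overline{XX^*}$ by adjointable operators (faithfulness being $aX=0\Rightarrow aK_l(X)=0\Rightarrow aa^*=0$). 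Thus $\|\delta_0(a)\|=\|L_{\delta_0(a)}\|\le 2\|D\|\,\|a\|$; the symmetric bound on the $X^*X$ corner together with $\|\delta_0|_X\|=\|D\|$ yields boundedness of $\delta_0$ on $A_0$.

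Being bounded, $\delta_0$ extends by continuity to a $*$-derivation of the C*-algebra $\overline{A_0}=\bigl(\begin{smallmatrix}K_l(X)&X\\X^*&K_r(X)\end{smallmatrix}\bigr)$, and then to $A_X$ by declaring it to vanish on the adjoined scalars $\IC 1_H\oplus\IC 1_K$; the Leibniz rule is preserved because a derivation necessarily annihilates local units. The resulting $\delta$ satisfies $\delta=\delta^*$, $\delta(e)=0$ (as $e=\bigl(\begin{smallmatrix}1_H&0\\0&0\end{smallmatrix}\bigr)$ lies in the annihilated part), and $\delta|_X=D$, so $\delta\in D^*_p(A_X)$ with $p=e$ and $\Delta(\delta)=\delta|_X=D$. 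Since $D\in D_{TRO}(X)$ was arbitrary, $\Delta$ is onto.
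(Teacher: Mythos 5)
Your proposal is correct in substance and follows the same skeleton as the paper's proof (well-definedness, the derivation identity, boundedness via the commutator estimate $\delta_0(a)z=D(az)-a(Dz)$, extension by continuity plus $\delta(e)=0$, then surjectivity of $\Delta$), but it replaces both of the paper's external ingredients with self-contained arguments, which is a genuinely different route at the technical level. Where the paper twice cites Hamana's Lemma 2.3(iv) --- once to pass from $Wz=0$ for all $z\in X$ to $W=0$, and once for the norm identity $\|\alpha\|=\sup_{\|z\|\le 1}\|\alpha z\|$ on $XX^*$ --- you prove these facts directly: the $C^*$-identity trick $WW^*\in (WX)X^*=\{0\}$ for the first, and the observation that $a\mapsto L_a$ is a faithful, hence isometric, representation of $K_l(X)$ by adjointable operators on the Hilbert $K_r(X)$-module $X$ for the second. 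More significantly, where the paper verifies only the Jordan identity $\delta_0(a^2)=\delta_0(a)a+a\delta_0(a)$ and then invokes Sinclair's theorem to conclude that the extension is an associative derivation, you verify the Leibniz rule directly on generators --- precisely the alternative the paper relegates to a footnote as ``a more involved calculation.'' Your version is more elementary and avoids two citations; the paper's is shorter. (The paper also notes, via Barton--Friedman, that $D$ is automatically bounded; under this paper's definition $D\in B(X)$, so your free use of $\|D\|$ is legitimate.)

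One caveat on your Leibniz verification: bilinearity reduces the identity to pairs of spanning elements, but those pairs include products such as $(xy^*)\cdot(uv^*)$, $x\cdot(z^*w)$, and $y^*\cdot(uv^*)$, not only the three cases you list, and ``linearity propagates'' does not by itself supply the missing ones. They do follow from your three cases together with self-adjointness of $\delta_0$ and re-association --- e.g.\ write $(xy^*)(uv^*)=((xy^*)u)v^*$ and apply your $XX^*\times X$ and $X\times X^*$ cases, and obtain the cases with right-corner factors by taking adjoints --- but this bootstrapping should be spelled out. Similarly, your extension to $A_X$ needs the (routine) remarks that setting $\delta=0$ on $\IC 1_H\oplus\IC 1_K$ is consistent when $1_H$ already lies in $K_l(X)$, and that the extension remains a derivation because $\delta$ preserves the four corners; the phrase about ``local units'' gestures at this without proving it. Both points are fixable and of the same order as what the paper itself omits.
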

\begin{proof}
If $\sum_ix_iy_i^*=0$, then for every $z\in X$,  
\begin{eqnarray*}0&=& D(\sum_i  x_iy_i^*z)\\
&=&\sum_i((Dx_i)y_i^*z+x_i(Dy_i)^*z+x_iy_i^*(Dz))\\
&=&(\sum_i((Dx_i)y_i^*+x_i(Dy_i)^*))z.
\end{eqnarray*} Since this is true for every $z$, we have $\sum_i((Dx_i)y_i^*+x_i(Dy_i)^*)=0$ (see \cite[Lemma 2.3(iv)]{Hamana99}) and
it follows that $\delta_0$ is well defined.\medskip

The map $\delta_0$ is self-adjoint since if $a=\left(\begin{smallmatrix}
\sum_i  x_iy_i^* & x  \\
y^* & \sum_j z_j^*w_j
\end{smallmatrix}\right)$, then 
\begin{eqnarray*}
\delta_0(a^*)&=&
\delta_0\left(\begin{matrix}
\sum_i  y_ix_i^* & y  \\
x^* & \sum_j w_j^*z_j
\end{matrix}\right)\\
&=&\left(\begin{matrix}
\sum_i(y_i(Dx_i)^*+(Dy_i)x_i^*) & Dy  \\
(Dx)^* & \sum_j(w_j^*(Dz_j)+(Dw_j)^*z_j)
\end{matrix}\right)\\
&=&\delta_0(a)^*.
\end{eqnarray*}

It is easy to verify that $\delta_0(a^2)=\delta_0(a)a+a\delta_0(a)$ so that $\delta_0$ is a Jordan *-derivation of $A_0$. (We omit that calculation.)

To see that $\delta_0$ is bounded, we first note that $D$ is bounded, since it is a Jordan triple derivation on the JB*-triple $X$ with the Jordan triple product $\{xyz\}=(xy^*z+zy^*x)/2$, and hence bounded by the theorem of Barton and Friedman \cite{BarFri90}.    Now denoting $\sum_i(x_i(Dy_i)^*+(Dx_i)y_i^*)$ by $\alpha$, we have (by \cite[Lemma 2.3(iv)]{Hamana99} again) $\|\sum_i(x_i(Dy_i)^*+(Dx_i)y_i^*)\|=$
\begin{eqnarray*}
\| \alpha\|
&=&\sup_{\|z\|\le 1,z\in X}\|\alpha z\|\\
&=&\sup_{\|z\|\le 1,z\in X}\|\alpha z+\overbrace{\sum_ix_iy_i^*(Dz)-\sum_ix_iy_i^*(Dz)}^{=0}\|\\
&=&\sup_{\|z\|\le 1,z\in X}\|\sum_i D(x_iy_i^*z)-\sum_ix_iy_i^*(Dz)\|\\
&=&\sup_{\|z\|\le 1,z\in X}\|D\sum_i x_iy_i^*z-\|D\|\sum_ix_iy_i^*\frac{Dz}{\|D\|}\|\\
&\le&2\|D\|\|\sum_i x_iy_i^*\|.
\end{eqnarray*}
Thus $\delta_0$ is bounded and therefore extends to a bounded Jordan *-derivation $\delta$ of $\overline{A_0}^n$ and hence to $A_X$ by setting $\delta(e)=0$, where $e=\left[\begin{smallmatrix} 1_H&0\\ 0&0\end{smallmatrix}\right]$.  By the theorem of Sinclair (\cite[Theorem 3.3]{Sinclair70}), $\delta$ is a derivation of $A_X$.\footnote{It is also easy to verify directly, by (a more involved) calculation, that $\delta_0$ is a derivation, thereby avoiding the use of Sinclair's theorem}
\end{proof}

For any C*-algebra $A\subset B(H)$, the Lie algebra homomorphism $\overline{A}^w\ni z\mapsto \hbox{ad}\, z\in  D(\overline{A}^w)$ is onto (theorem of 
Kadison and Sakai (\cite[4.1.6]{sakai})) and so we have the Lie algebra isomorphism
\[
\overline{A}^w/Z(\overline{A}^w)\simeq   D(\overline{A}^w).
\]
It follows (cf.\ \cite[4.1.7]{sakai}) that
\[
\{t\in  \overline{A}^w:\hbox{ad}\, t(A)\subset A\}/Z(\overline{A}^w)\simeq  D(A),
\]
and
\[
\{t\in  \overline{A}^w:t^*=-t, \hbox{ad}\, t(A)\subset A\}/Z(\overline{A}^w)\simeq  D^*(A).
\]
Further, for a projection $e$ in $A$, we have
\[
\{t\in  \overline{A}^w:et=te,t^*=-t, \hbox{ad}\, t(A)\subset A\}/Z(\overline{A}^w)\simeq  D_e^*(A).
\]

Using these facts in the setting of Lemma~\ref{lem:0125161}, and noting that, by  
\cite[page 268]{KaurRuan02}, $\overline{A_X}^w=A_X^{\prime\prime}=\left[\begin{smallmatrix}K_l(X)^{\prime\prime}&\overline{X}^w\\ \overline{X^*}^w&K_r(X)^{\prime\prime}\end{smallmatrix}\right]$, we can now prove the following theorem.

\begin{theorem}\label{thm:0128161}
Every TRO-derivation of a TRO $X$ is spatial in the sense that there exist $\alpha\in K_l(X)^{\prime\prime}$ and $\beta\in K_r(X)^{\prime\prime}$ such that $\alpha^*=-\alpha,\ \beta^*=-\beta$, and $Dx=\alpha x+x\beta$ for every $x\in X$.
\end{theorem}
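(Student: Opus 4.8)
The plan is to deduce the theorem directly from Lemma~\ref{lem:0125161} together with the Kadison--Sakai structure theory recalled immediately above it. The guiding idea is that a TRO-derivation, being an intrinsically ``triple'' object, can be linearized by passing to the linking C*-algebra $A_X$, where the classical inner-derivation theorem produces a skew-adjoint implementing element in the von Neumann algebra $A_X^{\prime\prime}$; the corner projection $e=\left(\begin{smallmatrix}1_H&0\\0&0\end{smallmatrix}\right)$ then splits that element into its two diagonal blocks, which become the left and right multipliers $\alpha$ and $\beta$.

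Concretely, I would argue as follows. Let $D$ be a TRO-derivation on $X\subset B(K,H)$. By Lemma~\ref{lem:0125161}, $D$ extends to a *-derivation $\delta$ of the unital linking algebra $A_X$ satisfying $\delta(e)=0$; in the terminology fixed just before the theorem this means $\delta\in D^*_e(A_X)$. Applying the last displayed isomorphism with $A=A_X$, there is an element $t\in\overline{A_X}^w=A_X^{\prime\prime}$ with $t^*=-t$, $et=te$, and $\delta=\operatorname{ad}t$ on $A_X$. Writing $t=\left(\begin{smallmatrix}\alpha&u\\v&\gamma\end{smallmatrix}\right)$ according to the decomposition $A_X^{\prime\prime}=\left(\begin{smallmatrix}K_l(X)^{\prime\prime}&\overline{X}^w\\\overline{X^*}^w&K_r(X)^{\prime\prime}\end{smallmatrix}\right)$ quoted above, the relation $et=te$ forces $u=0$ and $v=0$, so $t$ is block-diagonal with $\alpha\in K_l(X)^{\prime\prime}$ and $\gamma\in K_r(X)^{\prime\prime}$, while $t^*=-t$ gives $\alpha^*=-\alpha$ and $\gamma^*=-\gamma$. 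Finally, identifying $x\in X$ with $\left(\begin{smallmatrix}0&x\\0&0\end{smallmatrix}\right)$ and computing $\operatorname{ad}t$ on this element yields $Dx=\alpha x-x\gamma$, so putting $\beta=-\gamma$ (which is again skew-adjoint and lies in $K_r(X)^{\prime\prime}$) gives $Dx=\alpha x+x\beta$, exactly as claimed.

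I do not expect a genuine obstacle at the level of the theorem itself: once Lemma~\ref{lem:0125161} is available, the remaining argument is bookkeeping inside a $2\times 2$ matrix algebra. The one point that deserves care is the role of the condition $\delta(e)=0$, since it is precisely this that licenses the $e$-commuting form of the Kadison--Sakai isomorphism and hence the block-diagonality of $t$ that separates the left and right actions. All the substantive analytic content---the well-definedness and boundedness of the extension $\delta_0$ and its passage to the weak closure of $A_0$---has already been absorbed into Lemma~\ref{lem:0125161} and the cited facts, so the only things left to track are the matrix multiplication and the sign convention $\beta=-\gamma$.
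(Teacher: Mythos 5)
Your proposal is correct and follows essentially the same route as the paper's own proof: extend $D$ via Lemma~\ref{lem:0125161} to a self-adjoint $e$-derivation of $A_X$, apply the displayed Kadison--Sakai consequence for $D^*_e(A_X)$ to obtain a skew-adjoint $t\in \overline{A_X}^w$ commuting with $e$, note that commutation with $e$ forces $t$ to be block-diagonal with entries in $K_l(X)^{\prime\prime}$ and $K_r(X)^{\prime\prime}$, and compute $\operatorname{ad}t$ on the embedded copy of $X$. Your explicit sign bookkeeping (setting $\beta=-\gamma$) is in fact slightly tidier than the paper's, which leaves that replacement implicit.
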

\begin{proof}
If $D\in  D_{TRO}(X)$, choose $\delta=\hbox{ad}\, t$ for some $t\in\overline{A_X}^w$ with $t^*=-t$, $te=et$ and \[
\left[\begin{matrix} 0&Dx\\ 0&0\end{matrix}\right]=\delta\left[\begin{matrix}0&x\\ 0&0\end{matrix}\right].
\]
The conditions on $t$ imply that $t=\left[\begin{smallmatrix}\alpha&0\\ 0&\beta\end{smallmatrix}\right]$ with $\alpha^*=-\alpha$ and $\beta^*=-\beta$. Moreover
\[
\delta\left[\begin{matrix}0&x\\ 0&0\end{matrix}\right]=\left[\begin{matrix}\alpha &0\\ 0&\beta\end{matrix}\right]\left[\begin{matrix}0&x\\ 0&0\end{matrix}\right]-\left[\begin{matrix}0&x\\ 0&0\end{matrix}\right]\left[\begin{matrix}\alpha&0\\ 0&\beta\end{matrix}\right]=\left[\begin{matrix}0&\alpha x+x(-\beta)\\ 0&0\end{matrix}\right].
\]
\nopagebreak
\end{proof}

A TRO derivation $D$ of a TRO $X$ is said to be an {\it inner TRO derivation} if there exist $\alpha=-\alpha^*\in XX^*$ and $\beta=-\beta^*\in X^*X$ such that $Dx=\alpha x+x\beta$ for $x\in X$.  Note that  there exist $a_i,b_i,c_j,d_j\in X$, $1\le i\le n,1\le j\le m$ such that
$\alpha=\sum_{i=1}^n(a_ib_i^*-b_ia_i^*)$ and $\beta=\sum_{j=1}^m(c_j^*d_j-d_j^*c_j)$.

\begin{corollary}\label{cor:0307161}
Every TRO derivation of a C$^*$-algebra $A$ is of the form $A\ni x\mapsto \alpha x+x\beta$ with elements $\alpha,\beta\in \overline{A}^w$ with $\alpha^*=-\alpha, \beta^*=-\beta$.  In particular, every TRO derivation of a von Neumann algebra is an inner TRO derivation\end{corollary}

Thus, every
W$^*$-TRO  which is TRO-isomorphic to a von Neumann algebra has only inner TRO derivations. For example, this is the case for the stable W$^*$-TROs of \cite{Ruan04} (see subsection~\ref{subsec:0426161}) and the weak*-closed right ideals in certain continuous von Neumann algebras acting on separable Hilbert spaces (see Theorem~\ref{prop:0307162}).
\smallskip

Theorem~\ref{thm:0128161} is an improvement of  \cite{Zalar95}, in which, although proved for the slightly more general case of derivation pairs,  it is assumed that the TRO (called B*-triple system in \cite{Zalar95}) contains the finite rank operators.  For the extension of Zalar's result to unbounded operators, see \cite{Timmermann2000}. \smallskip

A triple derivation $\delta$ of a JC$^*$-triple  $X$ is said to be an {\it inner triple derivation} if there exist finitely many elements $a_i,b_i\in X$, $1\le i\le n$,  such that $\delta x=\sum_{i=1}^n(\{a_ib_ix\}-\{b_ia_ix\})$ for $x\in X$, where $\{xyz\}=(xy^*z+zy^*x)/2$.  For convenience, we denote the inner triple derivation $x\mapsto \{abx\}-\{bax\}$ by $\delta(a,b)$. Thus $$\delta(a,b)(x)=(ab^*x+xb^*a-ba^*x-xa^*b)/2.$$

Let $X$ be a TRO.  As noted in the proof of Lemma~\ref{lem:0125161}, $X$ is a JC$^*$-triple in the triple product $(xy^*z+zy^*x)/2$, and every TRO-derivation of $X$ is obviously a triple derivation.  On the other hand, every inner triple derivation is an inner TRO-derivation.  Indeed, if
$\delta(x)=\{abx\}-\{bax\}$, for some $a,b\in X$, then
$\delta(x)=Ax+xB$, where $A=ab^*-ba^*\in XX^*$, $B=b^*a-a^*b\in X^*X$ with $A,B$ skew-hermitian. Moreover, since by \cite[Theorem 4.6]{BarFri90}, every triple derivation $\delta$ on $X$ is the strong operator limit of a net $\delta_\alpha$ of inner triple derivations, hence TRO-derivations, we have (i) and (ii) in the following proposition.

\begin{proposition}\label{prop:0307161}
Let $X$ be a TRO. 
\begin{description}
\item[(i)] Every TRO-derivation is the strong operator limit of inner TRO-derivations.
\item[(ii)]  The triple derivations on $X$ coincide with the TRO-derivations. 
\item[(iii)] The inner triple derivations on $X$ coincide with the inner TRO-derivations
\item[(iv)] All TRO derivations of $X$ are inner, if and only if, all triple derivations of $X$ are inner.
\end{description}
\end{proposition}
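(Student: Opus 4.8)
The plan is to prove the four assertions (i)--(iv) of Proposition~\ref{prop:0307161} in sequence, leaning on the material assembled in the discussion immediately preceding the statement. Parts (i) and (ii) have in fact already been reduced in that paragraph: the Barton--Friedman approximation theorem \cite[Theorem 4.6]{BarFri90} writes every triple derivation as a strong operator limit of inner triple derivations, and the computation $\delta(a,b)(x) = Ax + xB$ with $A = ab^* - ba^* \in XX^*$ and $B = b^*a - a^*b \in X^*X$ (both skew-hermitian) shows that each inner triple derivation is an inner TRO-derivation. So first I would simply record that every triple derivation on $X$ is a strong operator limit of inner TRO-derivations, which is (i), and observe that this limiting relation, combined with Proposition~\ref{prop:0307161}(iii) (proved next), forces the triple derivations and TRO-derivations to coincide, giving (ii); alternatively (ii) follows directly since a TRO-derivation is visibly a triple derivation, and conversely a triple derivation is an SOT-limit of inner TRO-derivations, each of which is a TRO-derivation, and the defining identity $\tau(xy^*z) = \tau(x)y^*z + x\tau(y)^*z + xy^*\tau(z)$ is preserved under SOT-limits on the fixed elements $x,y,z$.

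For part (iii), the inclusion that inner triple derivations are inner TRO-derivations is exactly the displayed computation above the proposition, so I would just cite it. For the reverse inclusion, I would start from an inner TRO-derivation $Dx = \alpha x + x\beta$ with $\alpha = \sum_i (a_i b_i^* - b_i a_i^*) \in XX^*$ and $\beta = \sum_j (c_j^* d_j - d_j^* c_j) \in X^*X$, as in the paragraph defining inner TRO-derivations, and then rewrite each term $a_i b_i^* x + x b_i^* a_i$ and its skew partner in terms of the triple product. The key algebraic identity is that for $a,b \in X$ one has $\delta(a,b)(x) = (ab^*x + xb^*a - ba^*x - xa^*b)/2$, so that $2\delta(a,b)$ contributes precisely the $XX^*$-type and $X^*X$-type terms. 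Matching the finite sums and absorbing the factor of two into the choice of generators shows that $D = \sum_i \delta(a_i, b_i)$ (up to the obvious rescaling and reindexing), exhibiting $D$ as an inner triple derivation. This is routine bookkeeping once the single-generator identity is in hand.

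Part (iv) is the genuine content and is where I expect the main obstacle. The forward direction is immediate from (iii): if all TRO-derivations are inner TRO-derivations, then since every triple derivation is a TRO-derivation by (ii), and inner TRO-derivations coincide with inner triple derivations by (iii), every triple derivation is an inner triple derivation. The converse direction is the symmetric statement read through (ii) and (iii) in the opposite order: if every triple derivation is inner (as a triple derivation), then since TRO-derivations are triple derivations (ii) and inner triple derivations are inner TRO-derivations (iii), every TRO-derivation is an inner TRO-derivation. Thus (iv) is a formal consequence of the equivalences established in (ii) and (iii), and the real work is entirely front-loaded into getting those two coincidences exactly right. The subtle point to be careful about is ensuring that the notion of \emph{inner} is matched on both sides without slippage---that the same finite families of elements $a_i, b_i \in X$ realize a map as an inner triple derivation and as an inner TRO-derivation---which is precisely what the explicit identity $\delta(a,b)(x) = Ax + xB$ with $A,B$ built from $a,b$ guarantees, so no additional closure or density argument is needed for (iv) beyond the bijective correspondence of generators.
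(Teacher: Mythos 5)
Your proof follows the paper's own argument step for step --- (i) and (ii) from Barton--Friedman plus the displayed computation, (iv) as a formal consequence of (ii) and (iii), and (iii) by ``matching generators'' --- but the step you dismiss as routine bookkeeping is exactly where the argument breaks, and it does not close (the paper's own recipe, $a_i=x_i/2$, $b_i=y_i$, $a_{n+j}=w_j$, $b_{n+j}=z_j/2$, suffers from the identical defect). The point is that an inner triple derivation always produces a \emph{coupled} pair of left and right multiplications,
\[
\delta(a,b)(x)=\tfrac12\left[(ab^*-ba^*)x+x(b^*a-a^*b)\right],
\]
with both coefficients built from the \emph{same} pair $a,b$. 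So if you choose generators to reproduce $\alpha=\sum_i(a_ib_i^*-b_ia_i^*)$, say via $\sum_i\delta(a_i,2b_i)$, you unavoidably also produce an unwanted right term $x\gamma$ with $\gamma=\sum_i(b_i^*a_i-a_i^*b_i)$; likewise the generators chosen to reproduce $\beta=\sum_j(c_j^*d_j-d_j^*c_j)$ produce an unwanted left term $\eta x$ with $\eta=\sum_j(d_jc_j^*-c_jd_j^*)$. Your candidate sum is therefore $Dx+\eta x+x\gamma$, not $Dx$ (with the paper's normalization one gets $\tfrac12 Dx$ plus cross terms), and there is no freedom left to kill $\eta$ and $\gamma$, since the generators were already dictated by $\alpha$ and $\beta$. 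The identity $\delta(a,b)(x)=Ax+xB$ gives the inclusion ``inner triple $\subseteq$ inner TRO''; it does \emph{not} set up a bijective correspondence of generators, because in an inner TRO-derivation $\alpha$ and $\beta$ are independent, while in $\delta(a,b)$ they are not.

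Moreover, the gap cannot be repaired, because statement (iii) itself fails in general. Let $X=B(H,\IC)$ with $\dim H=\infty$, and let $D=i\,\mathrm{id}_X$. Then $D$ is a TRO-derivation and is inner TRO, with $\alpha=i\in XX^*=\IC$ (skew) and $\beta=0$. Writing $a(\xi)=\langle\xi,a'\rangle$ for $a\in X$, one computes that $ab^*=\langle b',a'\rangle$ and that $b^*a$ is the rank-one operator $\xi\mapsto\langle\xi,a'\rangle b'$, whence $\operatorname{tr}(b^*a-a^*b)=ab^*-ba^*$. Consequently every inner triple derivation $\sum_k\delta(a_k,b_k)$ has the form $x\mapsto Lx+xR$ with $R\in X^*X$ of finite rank and $\operatorname{tr}(R)=L$. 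But if $ix=Lx+xR$ for all $x\in X$, then $xT=0$ for every functional $x\in X$, where $T=(i-L)1_H-R$, forcing $R=(i-L)1_H$; since $R$ has finite rank and $\dim H=\infty$, this gives $L=i$ and $R=0$, contradicting $\operatorname{tr}(R)=L$. Hence $D$ is an inner TRO-derivation that is not an inner triple derivation. This also invalidates your (and the paper's) derivation of the forward half of (iv), which rests entirely on the false half of (iii); the example does not by itself falsify (iv) --- on this $X$ both sides of (iv) fail --- but the proof of that direction is gone. Parts (i), (ii), the inclusion ``inner triple $\subseteq$ inner TRO'', and the backward half of (iv) are fine as you argue them.
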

\begin{proof}
Since (iv) is immediate from (ii) and (iii), we only need to show part of (iii), that is,  that every inner TRO-derivation is an inner triple derivation.  If $D$ is an inner TRO-derivation, then $Dx=\alpha x+x\beta$, with $\alpha^*=-\alpha\in XX^*$ and $\beta^*=-\beta\in X^*X$.  We must show that there exist elements $a_k, b_k$ such that $Dx=\sum_{k=1}^p\delta(a_k,b_k) x$ where $\delta(a_k,b_k)$ is the inner triple derivation $x\mapsto \{a_kb_kx\}-\{b_ka_kx\}$.  If $\alpha=\sum_{i=1}^nx_iy_i^*$ and $\beta=\sum_{j=1}^mz_j^*w_j$, then it suffices to take $p=m+n$ and choose $a_i=x_i/2,\ b_i=y_i$ for $1\le i\le n$ and $a_{n+i}=w_i,\ b_{n+i}=z_i/2$ for $1\le i\le m$.
\end{proof}

\section{Derivations on W*-TROs}

A von Neumann algebra $M$  is an example of a unital reversible $JW^*$-algebra, and as such, by \cite[Theorem 2 and the first sentence in its proof]{HoMarPeRu}, every triple derivation on $M$ is an inner triple derivation.  Hence we see that the last statement in Corollary~\ref{cor:0307161} follows also from this and  Proposition~\ref{prop:0307161}(iv).  For completeness, we include a proof of the former result which avoids much of the Jordan theory, starting with the following lemma, the first part of which is straightforward.

 \begin{lemma}\label{lem:0412171}
 Let $A$ be a unital Banach $^*$-algebra equipped with the ternary product given by $\J abc =
\frac12 \ ( a b^* c + cb^* a) $ and the Jordan product $a\circ b=(ab+ba)/2$.

\begin{itemize} 
\item Let $D$ be an inner derivation, that is, $D =\hbox{ ad}\ a:x\mapsto ax-xa$, for some
$a$ in $A$. 
Then
$D=\hbox{ad}\ a$ is a *-derivation whenever $a^*=-a$. Conversely, if
$D$ is a *-derivation, then $a^*=-a+z$ for some $z$ in the center of $A$.
\item Every triple derivation is the sum of a Jordan *-derivation and an inner triple derivation.
\end{itemize}
\end{lemma}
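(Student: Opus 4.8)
\textbf{The first bullet} is a one-line computation in each direction. For the forward direction, if $a^*=-a$ and $D=\hbox{ad}\,a$, then $D^*(x)=D(x^*)^*=(ax^*-x^*a)^*=xa^*-a^*x=-xa+ax=D(x)$, so $D=D^*$ is a $*$-derivation. Conversely, if $D=\hbox{ad}\,a$ is a $*$-derivation, then equating $D(x)=ax-xa$ with $D^*(x)=xa^*-a^*x$ for all $x$ forces $(a+a^*)x=x(a+a^*)$, whence $z:=a+a^*$ lies in the center of $A$ and $a^*=-a+z$.

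\textbf{For the second bullet} the plan is to strip off an inner triple derivation so that the remainder annihilates the unit, and then to show that a unit-annihilating triple derivation is automatically a Jordan $*$-derivation. First I record that $\delta(1)$ is skew-hermitian: since $\{111\}=1$, applying the triple-derivation identity to $\{111\}$ and using $1^*=1$ gives $\delta(1)=\{\delta(1),1,1\}+\{1,\delta(1),1\}+\{1,1,\delta(1)\}=\delta(1)+\delta(1)^*+\delta(1)$, so $\delta(1)^*=-\delta(1)$. Writing $a=\delta(1)$, I then seek an inner triple derivation hitting $a$ at the unit. A direct computation gives $\delta(c,1)(x)=\frac12\big((c-c^*)x+x(c-c^*)\big)$, hence $\delta(c,1)(1)=c-c^*$; taking $c=a/2$ and using $a^*=-a$ yields $c-c^*=a/2-a^*/2=a$. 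Thus $I:=\delta(a/2,1)$ satisfies $I(1)=\delta(1)$, and since $I$ is a triple derivation, so is $J:=\delta-I$, now with $J(1)=0$.

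The heart of the argument is two special evaluations of the ternary product in a unital $*$-algebra: $\{x1z\}=x\circ z$ and $\{1y1\}=y^*$. Applying $J$ to the first identity and using $J(1)=0$ (so the middle term $\{xJ(1)z\}$ vanishes) yields $J(x\circ z)=J(x)\circ z+x\circ J(z)$, so $J$ is a Jordan derivation for the product $\circ$. Applying $J$ to the second identity and again using $J(1)=0$ yields $J(y^*)=\{1,J(y),1\}=J(y)^*$, that is $J=J^*$. Hence $J$ is a Jordan $*$-derivation and $\delta=J+I$ is the required decomposition.

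There is no serious obstacle here; the content is entirely encoded in the two identities $\{x1z\}=x\circ z$ and $\{1y1\}=y^*$, which convert the triple-derivation property into the Jordan-derivation and self-adjointness properties respectively. The only point requiring care is the bookkeeping of the factors of $\tfrac12$ in the ternary product and the choice of scaling $c=a/2$ in $I=\delta(a/2,1)$, which is exactly what makes $I(1)$ land on $\delta(1)$.
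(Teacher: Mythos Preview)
Your proof is correct and follows essentially the same approach as the paper: both show $\delta(1)^*=-\delta(1)$, subtract an inner triple derivation of the form $x\mapsto\delta(1)\circ x$ to kill the value at $1$, and then use the identities $\{x1z\}=x\circ z$ and $\{1y1\}=y^*$ to conclude the remainder is a Jordan $*$-derivation. Your scaling $I=\delta(\delta(1)/2,1)$ is the correct one (the paper's stated coefficient $-\tfrac14$ appears to be a typo; the computation $\delta(a,1)(x)=2(a\circ x)$ for $a^*=-a$ gives $\tfrac12$, as your argument implicitly confirms).
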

\begin{proof}
To prove the second statement, we modify the proof in \cite[Section 3]{HoPerRus13} which is in a different context. We note first that for a triple derivation $\delta$, $\delta(1)^*=-\delta(1)$. Next, for a triple derivation $\delta$, the mapping $\delta_1(x)=\delta(1)\circ x$ is equal to the inner
triple derivation $-\frac{1}{4}\delta(\delta(1),1)$ so that $\delta_0:=\delta-\delta_1$ is a triple derivation with $\delta_0(1)=0$.  Finally, any triple derivation which vanishes at 1 is a Jordan *-derivation.
\end{proof}
\begin{theorem}\label{thm:0228161}
Every triple derivation on a von Neumann algebra is an inner triple derivation.
\end{theorem}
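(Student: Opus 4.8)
The plan is to reduce, by means of Lemma~\ref{lem:0412171}, to the case of a Jordan $^*$-derivation, and then to settle that case using Sinclair's theorem together with the Kadison--Sakai theorem, thereby avoiding the Jordan-theoretic machinery of \cite{HoMarPeRu}. Concretely, let $\delta$ be a triple derivation on a von Neumann algebra $M$. First I would note that $\delta$ is bounded, since $M$ is a JB$^*$-triple in the product $\{xy^*z\}=(xy^*z+zy^*x)/2$ and triple derivations on JB$^*$-triples are bounded by Barton--Friedman \cite{BarFri90}. Then the second bullet of Lemma~\ref{lem:0412171} writes $\delta=\delta_0+\delta_1$, where $\delta_1$ is an inner triple derivation and $\delta_0$ is a Jordan $^*$-derivation with $\delta_0(1)=0$. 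Because a finite sum of inner triple derivations is again an inner triple derivation, it suffices to prove that $\delta_0$ is an inner triple derivation; adding $\delta_1$ back then gives the result for $\delta$.

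Next I would pass from the Jordan setting to the associative one. By Sinclair's theorem (\cite[Theorem 3.3]{Sinclair70}), every (bounded) Jordan $^*$-derivation of the C$^*$-algebra $M$ is an associative $^*$-derivation, so $\delta_0\in D^*(M)$. The Kadison--Sakai theorem (\cite[4.1.6]{sakai}) then supplies an element $a\in M$ with $\delta_0=\operatorname{ad} a$. Since $\delta_0$ is a $^*$-derivation, the first bullet of Lemma~\ref{lem:0412171} gives $a^*=-a+z$ for some central $z$; taking adjoints shows $z=z^*$, so replacing $a$ by $a-\tfrac12 z$ (which does not change $\operatorname{ad} a$, as $z$ is central) we may assume $a^*=-a$.

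Finally I would recognise $\delta_0$ as an inner TRO-derivation and invoke Proposition~\ref{prop:0307161}(iii). Viewing $M$ as a TRO, unitality gives $MM^*=M=M^*M$, and one writes $\delta_0(x)=ax-xa=ax+x(-a)$ with $\alpha:=a\in MM^*$ and $\beta:=-a\in M^*M$ both skew-hermitian. Thus $\delta_0$ is an inner TRO-derivation, and by Proposition~\ref{prop:0307161}(iii) the inner TRO-derivations coincide with the inner triple derivations, so $\delta_0$ is an inner triple derivation. This completes the reduction, and hence the proof.

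The substance of the argument lies entirely in the two cited theorems, and I do not expect a genuine obstacle. The one point requiring care is the bookkeeping that allows the implementing element $a$ to be taken skew-hermitian, which is exactly what the first bullet of Lemma~\ref{lem:0412171} provides; everything else is a direct assembly. If I wanted to avoid even Sinclair's theorem, the alternative route would be to verify by hand that a Jordan $^*$-derivation vanishing at $1$ on $M$ is already a genuine derivation, but the appeal to \cite[Theorem 3.3]{Sinclair70} is cleaner and is the same device used in the proof of Lemma~\ref{lem:0125161}.
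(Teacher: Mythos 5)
Your opening reductions are exactly the paper's: boundedness via \cite{BarFri90}, the splitting $\delta=\delta_0+\delta_1$ via the second bullet of Lemma~\ref{lem:0412171}, Sinclair's theorem \cite[Theorem 3.3]{Sinclair70} to make $\delta_0$ an associative $^*$-derivation, Kadison--Sakai \cite[4.1.6]{sakai} to write $\delta_0=\operatorname{ad} a$, and the recentering $a\mapsto a-\tfrac{1}{2}z$ so that $a^*=-a$ (the paper instead keeps $a^*+a=z$ central, but your normalization is harmless). The proof breaks at the final step, where you pass from ``$\delta_0$ is an inner TRO-derivation'' to ``$\delta_0$ is an inner triple derivation'' by citing Proposition~\ref{prop:0307161}(iii). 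Because $M$ is unital, $MM^*=M^*M=M$, so \emph{every} map $x\mapsto ax-xa$ with $a$ skew is trivially an inner TRO-derivation; hence, after your (correct) reductions, the assertion of Proposition~\ref{prop:0307161}(iii) for $X=M$ is not a lemma feeding into Theorem~\ref{thm:0228161} --- it \emph{is} Theorem~\ref{thm:0228161}. The argument is circular in substance: all of the difficulty has been relocated into the cited proposition.

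And that proposition cannot bear the load. Its proof in the paper does not verify: since $\delta(a,b)(x)=\tfrac{1}{2}(ab^*-ba^*)x+\tfrac{1}{2}x(b^*a-a^*b)$, the prescribed choices $a_i=x_i/2$, $b_i=y_i$, $a_{n+j}=w_j$, $b_{n+j}=z_j/2$ produce $\tfrac{1}{2}\alpha x+\tfrac{1}{2}x\beta+\tfrac{1}{4}x\sum_i(y_i^*x_i-x_i^*y_i)+\tfrac{1}{4}\sum_j(w_jz_j^*-z_jw_j^*)x$, which is not $\alpha x+x\beta$ in general. Worse, the nontrivial inclusion in (iii) is false for general TROs: on $X=B(H,\IC)$ with $\dim H=\infty$, the map $Dx=ix$ is an inner TRO-derivation (take $\alpha=i\in XX^*=\IC$, $\beta=0$), but it is not an inner triple derivation, because every finite sum $\sum_k\delta(a_k,b_k)$ on this $X$ has the form $x\mapsto Ax+xB$ with $B$ of finite rank and $\operatorname{tr}(B)=A$, whereas the only pair representing $D$ is $(A,B)=(i,0)$; this is precisely why $B(H,\IC)$ admits outer triple derivations (\cite[Corollary 3]{HoMarPeRu}, quoted after Theorem~\ref{prop:0307162}). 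For von Neumann algebras the inclusion you need is true, but only as a \emph{consequence} of Theorem~\ref{thm:0228161}, so it cannot be used to prove it. The missing idea --- the real content of the paper's proof --- is to invoke the decomposition $M=Z(M)+[M,M]$, write $a$ modulo the center as $\sum_j([b_j,b_j^{\prime}]-[c_j,c_j^{\prime}])$ with $b_j,b_j^{\prime},c_j,c_j^{\prime}$ self-adjoint (a reality check on $a^*+a$ being central is what discards the unwanted imaginary part), and then use the identity $\operatorname{ad}[b,b^{\prime}]=\delta(b,2b^{\prime})$ valid for self-adjoint $b,b^{\prime}$, which exhibits $\operatorname{ad} a$ as the inner triple derivation $\sum_j\left(\delta(b_j,2b_j^{\prime})-\delta(c_j,2c_j^{\prime})\right)$.
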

\begin{proof} 
 It suffices, by the second statement in Lemma~\ref{lem:0412171}, to show that every self-adjoint Jordan derivation is an inner triple derivation.  If $\delta$ is a self-adjoint Jordan derivation of $M$, then  $\delta$ is an associative derivation (by the theorem of Sinclair, \cite[Theorem 3.3]{Sinclair70})) and hence by the theorem of 
Kadison and Sakai (\cite[4.1.6]{sakai}) and  the first statement in Lemma~\ref{lem:0412171}, $\delta (x)=ax-xa $ where $a^*+a=z$ is a self adjoint element of the center of $M$.
Since for every von Neumann algebra, we have  $M=Z(M)+[M,M]$, where $Z(M)$ denotes the center of $M$ (see \cite[Section 3]{PluRus15} for a discussion of this fact), we can  therefore write
\begin{eqnarray*}
a&=&z'+\sum_j[b_j+ic_j,b_j^\prime+ic_j^\prime]\\
&=&z'+\sum_j([b_j,b_j^\prime]-[c_j,c_j^\prime])+i\sum_j([c_j,b_j^\prime]+[b_j,c_j^\prime]),
\end{eqnarray*}
where $b_j,b_j^\prime,c_j,c_j^\prime$ are self adjoint elements of $M$ and  $z'\in Z(M)$.  

It follows that
\[
0=a^*+a-z=(z^\prime)^*+z^\prime-z+2i\sum_j([c_j,b_j^\prime]+[b_j,c_j^\prime])
\]
so that $\sum_j([c_j,b_j^\prime]+[b_j,c_j^\prime])$ belongs to the center of $M$.  We now have
\[\delta=\hbox{ad}\, a=\hbox{ad}\, \sum_j([b_j,b_j^\prime]-[c_j,c_j^\prime]).
\]
A direct calculation shows that 
$\delta$ is equal to the inner triple derivation $\sum_j\left(\delta(b_j,2b_j^\prime)-\delta(c_j,2c_j^\prime)\right)$, completing the proof.\end{proof}

\subsection{Weakly closed right ideals in von Neumann algebras}\label{subs:0307161}
In this subsection, we shall consider the TRO $pM$ where $M$ is a von Neumann algebra and $p$ is a projection in $M$.\smallskip

A TRO of the form $pM$, with $M$ a continuous von Neumann algebra,  is classified into four types in \cite{Horn88} as follows.

\begin{itemize}
\item $II_1^a$ if $M$ is of type $II_1$ and $p$ is (necessarily) finite.
\item $II_{\infty,1}^a$ if $M$ is of type $II_\infty$ and $p$ is a finite projection.
\item $II^a_\infty$ if $M$ is of type $II_\infty$ and $p$ is a properly infinite projection.
\item $III^a$ if $M$ is of type III and $p$ is a (necessarily) properly infinite projection.
\smallskip

Similarly, we also define types for $pM$ for $M$ of type I:
\smallskip

\item $I^a_1$ if $M$ is finite of type $I$ and $p$ is (necessarily) finite.
\item $I^a_{\infty,1}$ if $M$ is of type $I_\infty$ and $p$ is a finite projection.
\item $I^a_\infty$ if $M$ is of type $I_\infty$ and $p$ is a properly  infinite projection.
\end{itemize}

The following theorem involves the cases $II_{\infty}^a,III^a$ and when $M$ is a factor, the cases  $I^a_1,I^a_{1,\infty}$,  and $I^a_\infty$.

\begin{theorem}\label{prop:0307162}
Let $X=pM$ be a TRO, where $M$ is a von Neumann algebra and $p$ is a projection in $M$. 
\begin{description}
\item[(i)] If $X$ is of type $II_{\infty}^a$ or $III^a$, and has a separable predual, then every TRO-derivation of $X$ is an inner TRO-derivation. 
\item[(ii)] If $M$ is of type III and countably decomposable, then every TRO-derivation of $X=pM$ is an inner TRO-derivation. 
\item[(iii)] If $M=B(H)$ is a factor of type I, then
\begin{enumerate}
\item If $\dim H<\infty$, then every TRO-derivation of $X=pM$ is an inner TRO-derivation.
\item If $\dim pH=\dim H$, then every TRO-derivation of $X=pM$ is an inner TRO-derivation.
\item If $\dim pH<\dim H=\infty$, then $X=pM$ admits outer  TRO-derivations.
\end{enumerate}
\end{description}
\end{theorem}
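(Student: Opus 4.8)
The plan is to run everything through the spatial representation of Theorem~\ref{thm:0128161}. Writing $X=pM$, a direct computation gives $XX^*=pMp=K_l(X)''$ and $X^*X=MpM$, whose weak closure is $K_r(X)''=\overline{MpM}^w=Mz(p)$, where $z(p)$ denotes the central support of $p$. Thus every TRO-derivation has the form $Dx=\alpha x+x\beta$ with $\alpha=-\alpha^*\in pMp=XX^*$ and $\beta=-\beta^*\in Mz(p)$. Since the $\alpha$-part already lies in $XX^*$, innerness of $D$ is controlled entirely by the $\beta$-part: $D$ is inner as soon as $\beta$ may be taken in the algebraic ideal $X^*X=MpM$.

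This suggests isolating the following sufficient condition, which I would prove first as a lemma: if $z(p)\in MpM$, then $MpM=Mz(p)$, and hence every TRO-derivation of $pM$ is inner. Indeed $MpM\subseteq Mz(p)$ always, and if $z(p)=\sum_i a_ipb_i\in MpM$ then $mz(p)=\sum_i ma_ipb_i\in MpM$ for every $m\in M$, giving the reverse inclusion; with $\beta\in Mz(p)=MpM=X^*X$ the representation $Dx=\alpha x+x\beta$ is by definition an inner TRO-derivation.

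The positive assertions then reduce to verifying $z(p)\in MpM$. For (iii)(1), $M=M_n(\IC)$ is simple, so $MpM=M_n(\IC)\ni 1=z(p)$. For (iii)(2), $\dim pH=\dim H$ forces $p\sim 1$ in $B(H)$, so there is a partial isometry $v$ with $v^*v=1=z(p)$ and $vv^*=p$, whence $z(p)=v^*pv\in MpM$. For (i) and (ii) I would invoke comparison theory: $p$ is properly infinite (automatically so in type III), its central support $z(p)$ is then again properly infinite, and in a countably decomposable von Neumann algebra two properly infinite projections with the same central support are Murray--von Neumann equivalent; thus $p\sim z(p)$ and again $z(p)=v^*pv\in MpM$. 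Separable predual / countable decomposability enters only at this step.

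The genuinely different part is the negative result (iii)(3), which I expect to be the main obstacle. Here $M=B(H)$ is a factor, so $z(p)=1$, while $\dim pH<\dim H=\infty$ forces every element of $MpM$ to have rank strictly below $\dim H$; in particular $1\notin MpM$, so the sufficient condition fails. To manufacture an outer derivation I would first pin down the indeterminacy of the spatial representation: a short Peirce-type computation shows that $\alpha x+x\beta=\alpha'x+x\beta'$ on $pM$ forces $\beta'-\beta\in\IC 1$, so the pair $(\alpha,\beta)$ is unique up to $(\alpha-cp,\beta+c1)$ with $c\in\IC$. Consequently $D$ is inner precisely when $\beta\in MpM+\IC 1$. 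I would then choose a self-adjoint $s$ with $s\notin MpM+\IC 1$ — for instance a projection $q$ with $q\sim 1-q\sim 1$, which has rank $\dim H$ and remains outside the ideal even after subtracting a scalar — and set $\beta=is$, $Dx=x\beta$. One checks directly that $x\mapsto x\beta$ is a TRO-derivation whenever $\beta^*=-\beta$, and by the uniqueness statement it cannot be inner. The delicate points are the rank bound yielding $1\notin MpM$ and the verification that the spatial representation is unique up to scalars, so that membership in $MpM+\IC 1$ is exactly the obstruction to innerness.
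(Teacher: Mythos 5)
Your proposal takes a genuinely different route from the paper's, and apart from one point in part (i) discussed below it is correct. The paper proves (i) by quoting the classification theorem of Horn--Neher (\cite[Theorem 5.16, Lemma 5.15]{Horn88}): for $M$ continuous with separable predual and $p$ properly infinite, $pM$ is triple (indeed TRO-) isomorphic to a von Neumann algebra, after which Theorem~\ref{thm:0228161} and Proposition~\ref{prop:0307161}(iv) (or Corollary~\ref{cor:0307161}) finish; it proves (ii) by cutting down to $c(p)M$ and using \cite[2.2.14]{sakai} to exhibit a TRO-isomorphism of $pM$ onto a von Neumann algebra; (iii)(1) is referred to Meyberg's theorem on finite-dimensional semisimple Jordan triple systems, (iii)(2) to Theorem~\ref{thm:0228161}, and (iii)(3) to the outer triple derivations on $B(H,pH)$ from \cite{HoMarPeRu}. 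You instead run everything through Theorem~\ref{thm:0128161} together with the single criterion $z(p)\in MpM$. Your key lemma is correct ($MpM\subseteq Mz(p)$ always, and $z(p)\in MpM$ forces equality, so the spatial pair $(\alpha,\beta)$ already lies in $XX^*\times X^*X$), as are its verifications in (ii), (iii)(1) and (iii)(2). Your treatment of (iii)(3) is also correct and is self-contained where the paper cites the literature: every element of $MpM$ is a finite sum $\sum_k a_kpb_k$ and so has rank at most $\dim pH<\dim H$ (in the infinite case, finite sums of rank-$\,\le\dim pH$ operators still have rank $\le\dim pH$); the map $x\mapsto x\beta$ is a TRO-derivation whenever $\beta^*=-\beta$; and the uniqueness of the spatial pair up to $(\alpha,\beta)\mapsto(\alpha-cp,\beta+c1)$ can indeed be checked with rank-one operators $\xi\otimes\eta^*\in pM$, so that $x\mapsto x(iq)$ with $q\sim 1-q\sim 1$ is outer. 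What the two approaches buy: the paper's proof is short given heavy structure theory (Horn--Neher, Meyberg, Ho--Mart\'inez--Peralta--Russo); yours avoids all Jordan-theoretic classification, unifies the positive cases under one reusable algebraic criterion, and even shows that continuity of $M$ is irrelevant in (i) --- only proper infiniteness of $p$ plus countability matters.

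The gap is in part (i). There the hypothesis is that $X=pM$ has a separable predual; it is \emph{not} assumed that $M$, or even $Mz(p)$, is countably decomposable. The comparison theorem you invoke (properly infinite projections with equal central supports in a countably decomposable algebra are equivalent) requires the projections $p$ and $z(p)$ to be countably decomposable, i.e.\ that $pMp$ and $Mz(p)$ be countably decomposable. For $pMp$ this is immediate, since $pMp$ is a weak*-closed subspace of $X$ and hence $(pMp)_*$ is a (separable) quotient of $X_*$. But for $Mz(p)=K_r(X)''$ it is a genuine step: one must show, for instance, that the functionals $n\mapsto\varphi(xn)$ with $\varphi\in X_*$, $x\in X$, are total in $(Mz(p))_*$, and then use weak*-separability of the unit ball of $X$ to extract a countable total family, so that $(Mz(p))_*$ is separable. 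This step cannot be waved away, because without countability the conclusion $p\sim z(p)$ is simply false: take $M=B(H)$ with $H$ nonseparable and $p$ of infinite rank with $\dim pH<\dim H$; then $p$ and $z(p)=1$ are properly infinite with the same central support but inequivalent, and by your own (iii)(3) such $pM$ has outer derivations. So the separable-predual hypothesis must be explicitly converted into countable decomposability of $Mz(p)$, whereas your write-up treats ``separable predual'' and ``countably decomposable'' as interchangeable at exactly the point where the distinction matters. (The paper sidesteps this by letting Horn's theorem, whose hypothesis is stated in terms of separable preduals, absorb the issue.) Once this lemma is inserted, your proof of (i) is complete.
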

\begin{proof}
If $M$ is a continuous von Neumann algebra with a separable predual  and $p$ is a properly infinite projection in $M$, then it is shown in \cite[Theorem 5.16]{Horn88} that $pM$ is triple isomorphic to a von Neumann algebra, and hence by Theorem~\ref{thm:0228161}, every triple derivation is an inner triple derivation in this case. Consequently, by Proposition~\ref{prop:0307161}(iv), every TRO-derivation is an inner TRO-derivation. (Another way to see this latter fact is to note that by \cite[Lemma 5.15]{Horn88}, $pM$ is actually TRO-isomorphic to a von Neumann algebra, and to apply Corollary~\ref{cor:0307161}.) This proves (i).
  \smallskip

To prove (ii), we note first that
if $A$ is a von Neumann algebra with a projection $p\sim 1$, then $pA$ is TRO-isomorphic to $A$.
Indeed, If $u$ is a partial isometry in $A$ with $uu^*=p$ and $u^*u=1$, then $x\mapsto u^*x$ is a TRO-isomorphism from $pA$ onto $A$. Now if $A$ is of type III, then $\tilde A:=c(p)A$ is of type III, $c(p)$ is the identity of $\tilde A$ and
$pA=p\tilde A$.  Further, if $A$ is countably decomposable, then by \cite[2.2.14]{sakai}, since in $\tilde A$, $c(p)=1_{\tilde A}=c(1_{\tilde A})$, we have $p\sim 1_{\tilde A}$, so $\tilde A$ is TRO-isomorphic to $p\tilde A=pA$.\smallskip

Finally,  let $M=B(H)$. The first statement in (iii) follows from the fact that every finite dimensional semisimple Jordan triple system has only inner derivations.  This result first appeared in \cite[Chapter 11]{Meyberg72} (for an outline of a proof, see  \cite[Theorem 2.8,p.\ 136]{RussoAlmeria} and for the definitions of Jordan triple system and semisimple, see \cite[Section 1.2]{Chubook}).
If  $\dim pH=\dim H$, then $pM\simeq B(H)$ has only inner triple derivations by Theorem~\ref{thm:0228161}.  On the other hand, if 
$\dim pH<\dim H=\infty$, then $pM\simeq B(H,pH)$ has outer triple derivations, as shown in \cite[Corollary 3]{HoMarPeRu}.  By Proposition~\ref{prop:0307161}(iv), this proves (iii)
\end{proof}

\begin{remark}\label{rem:3.5}
Although it follows from Theorem~\ref{prop:0307162}, it is worth pointing out that the TROs $B(\IC,H)$ and $B(H,\IC)$ support outer TRO derivations if and only if $\dim H=\infty$.
According to \cite[Lemma 5.15]{Horn88}, if $B$ is a von Neumann algebra of type $II_\infty$ or III, and $H$ is a separable Hilbert space, then $B$ and $B\overline\otimes B(\IC,H)$ are TRO-isomorphic.  Corollary~\ref{cor:0307161} shows that $B\overline\otimes B(\IC,H)$ has only inner TRO-derivations and only inner triple derivations, although, as just noted,  $B(\IC,H)$ can have an outer TRO derivation and an outer triple derivation. This contrasts the situation of derivations on tensor products of C$^*$-algebras, as in  \cite[Proposition 3.2]{Batty78}).
\end{remark}

\subsection{W*-TROs of types I,II,III}\label{subsec:0426161}

We begin by recalling some concepts from \cite{Ruan04}.
If $R$ is a von Neumann algebra and $e$ is a projection in $R$, then $V:=eR(1-e)$ is a W*-TRO.  Conversely if $V\subset B(K,H)$ is a W*-TRO, then with $V^*=\{x^*:x\in V\}\subset B(H,K)$, $M(V)=\overline{XX^*}^{sot} \subset B(H)$, $N(V)=\overline{X^*X}^{sot} \subset B(K)$, we let 
\[
R_V=\left[\begin{matrix}
M(V)&V\\
V^*&N(V)
\end{matrix}\right]\subset B(H\oplus K)
\]
denote the  linking von Neumann algebra of $V$.  Then we have a SOT-continuous TRO-isomorphism $V\simeq eRe^\perp$, where $e=\left[\begin{smallmatrix} 1_H&0\\ 0&0\end{smallmatrix}\right]$ and $e^\perp=\left[\begin{smallmatrix} 0&0\\ 0&1_K\end{smallmatrix}\right]$.\smallskip

A W*-TRO $V$ is {\it stable} if it is TRO-isomorphic to $B(\ell_2)\overline\otimes V$.   A W*-TRO is of type I,II,or III, by definition, if its linking von Neumann algebra is of that type as a von Neumann algebra.  There is a further classification of the types I and II depending on the types of $M(V)$ and $N(V)$ leading to the types $I_{m,n}, II_{\alpha,\beta}$ where $m,n$ are cardinal numbers and $\alpha,\beta\in \{1,\infty\}$.  See \cite[Section 4]{Ruan04} for detail.

In what follows, for ultraweakly closed subspaces $A\subset M$ and $B\subset N$, where $M$ and $N$ are von Neumann algebras, $A\overline\otimes B$ denotes the ultraweak closure of the algebraic tensor product $A\otimes B$.

We shall use the following  results from \cite{Ruan04}, which we summarize as a theorem.

\begin{theorem}[Ruan \cite{Ruan04}]\label{thm:0316161}
Let $V$ be a W$^*$-TRO acting on separable Hilbert spaces.

{\rm (i)} \cite[Theorem 3.2]{Ruan04}
If $V$ is a stable W*-TRO, then $V$ is TRO-isomorphic to $M(V)$ and to $N(V)$.\smallskip

{\rm (ii)} \cite[Corollary 4.3]{Ruan04}
If $V$ is a  W*-TRO of one of the types $I_{\infty,\infty}, II_{\infty,\infty}$ or $III$, then $V$ is a stable W*-TRO, and hence TRO-isomorphic to a von Neumann algebra.
\smallskip

{\rm (iii)} \cite[Theorem 4.4]{Ruan04}
If $V$ is a  W*-TRO of  type $II_{1,\infty}$ (respectively $II_{\infty,1}$), then $V$ is  TRO-isomorphic to $B(H,\IC)\overline\otimes M$ (respectively $B(\IC,H)\overline\otimes N$), where $M$ (respectively $N$) is a von Neumann algebra of type $II_1$.
\end{theorem}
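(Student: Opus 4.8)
The plan is to reduce all three parts to the comparison theory of projections in the linking von Neumann algebra $R_V=\left[\begin{smallmatrix}M(V)&V\\ V^*&N(V)\end{smallmatrix}\right]$, using the corner projections $e=\left[\begin{smallmatrix}1_H&0\\0&0\end{smallmatrix}\right]$ and $e^\perp=\left[\begin{smallmatrix}0&0\\0&1_K\end{smallmatrix}\right]$, for which $V=eR_Ve^\perp$, $M(V)=eR_Ve$ and $N(V)=e^\perp R_Ve^\perp$. The key elementary observation is that a Murray--von Neumann equivalence $e\sim e^\perp$ in $R_V$, implemented by a partial isometry $u\in eR_Ve^\perp=V$ with $uu^*=e$ and $u^*u=e^\perp$, already produces TRO-isomorphisms $V\to M(V)$, $x\mapsto xu^*$, and $V\to N(V)$, $x\mapsto u^*x$: a direct check using $u^*u=e^\perp$, $uu^*=e$ and $x=exe^\perp$ shows that both maps preserve the triple product $xy^*z$, are SOT-continuous, and are bijective with SOT-continuous inverses. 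Thus for (i) it suffices to prove $e\sim e^\perp$ whenever $V$ is stable.

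For (i), stability $V\simeq B(\ell_2)\overline\otimes V$ transports to the linking algebra as $R_V\simeq B(\ell_2)\overline\otimes R_V$, under which $e$ and $e^\perp$ correspond to $1\otimes e$ and $1\otimes e^\perp$ and hence become properly infinite projections. Because $V=eR_Ve^\perp\neq 0$ and $R_V$ is generated by $V$, the central supports of $e$ and $e^\perp$ coincide (each dominates the other through $V$). Two properly infinite projections with the same central support are equivalent, so $e\sim e^\perp$, and the previous paragraph gives $V\simeq M(V)\simeq N(V)$.

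For (ii), I would first observe that in each of the types $I_{\infty,\infty}$, $II_{\infty,\infty}$ and $III$ both flanking algebras $M(V)$ and $N(V)$ are properly infinite, so $R_V$ is properly infinite; using the separable-predual hypothesis, a type $I_\infty$, $II_\infty$ or $III$ von Neumann algebra absorbs $B(\ell_2)$, and the same absorption passes to the off-diagonal corner, giving $V\simeq B(\ell_2)\overline\otimes V$, that is, stability, after which (i) finishes the proof. For (iii), say $M(V)$ is of type $II_1$ and $N(V)$ of type $II_\infty$. Here one writes the type $II_\infty$ algebra $N(V)$ (separable predual) as $B(H)\overline\otimes M$ with $M$ of type $II_1$, and then reconstructs the matching tensor decomposition of the bimodule $V$: choosing the system of matrix units of the $B(H)$-factor of $N(V)$ and using them to ``unfold'' $V$ into a row over the $II_1$ algebra $M$, one obtains the TRO-isomorphism $V\simeq B(H,\IC)\overline\otimes M$ (the left algebra of $B(H,\IC)\overline\otimes M$ is $M$, of type $II_1$, and the right algebra is $B(H)\overline\otimes M$, of type $II_\infty$, matching the type), with the symmetric case $II_{\infty,1}$ handled by the same argument applied to $V^*$.

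I expect the main obstacle to be the structural reconstruction in (iii), where one must produce an honest SOT-continuous TRO-isomorphism realizing the tensor splitting rather than merely matching the flanking algebras and their types; verifying that stability genuinely yields proper infiniteness and the absorption $V\simeq B(\ell_2)\overline\otimes V$ in (ii), together with the coincidence of central supports used in (i), are the remaining technical points. All three parts are carried out in full detail by Ruan in \cite{Ruan04} at the cited locations, and I would ultimately appeal to those arguments.
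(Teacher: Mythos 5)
The paper itself contains no proof of this statement: it is introduced with ``We shall use the following results from \cite{Ruan04}, which we summarize as a theorem,'' and each part carries its own citation to Ruan. So the paper's entire ``proof'' is the appeal to \cite{Ruan04} that you make in your final sentence, and at that level your proposal coincides with the paper. Your sketches, however, go beyond what the paper does, so they deserve scrutiny on their own terms.

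Parts (i) and (ii) of your sketch are essentially the right comparison-theoretic arguments, with two caveats. In (i), the fact that two properly infinite projections with equal central supports are equivalent requires countable decomposability of $R_V$; this is supplied by the separability hypothesis, but you should say so. In (ii), the phrase ``the same absorption passes to the off-diagonal corner'' hides the actual mechanism: an isomorphism $R_V\simeq B(\ell_2)\overline\otimes R_V$ need not carry $e$ to $1\otimes e$, so it does not directly act on the corner $V=eR_Ve^\perp$. What one really uses is proper infiniteness of $e$ and of $e^\perp$ \emph{separately}: writing $e=\sum_i e_i$ with $e_i\sim e$ gives a column unfolding $V\simeq B(\mathbb{C},\ell_2)\overline\otimes V$, the analogous decomposition of $e^\perp$ gives a row unfolding $V\simeq B(\ell_2,\mathbb{C})\overline\otimes V$, and the two combine to give stability. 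The genuine gap is in (iii). Unfolding $V$ along the matrix units of $N(V)\simeq B(\ell_2)\overline\otimes M_0$ produces $V\simeq B(H,\mathbb{C})\overline\otimes (Vf)$, where $f$ corresponds to $e_{11}\otimes 1$; here $Vf$ is a W*-TRO whose left algebra is $M(V)$ (type $II_1$) and whose right algebra is $fN(V)f$ (type $II_1$), i.e.\ a W*-TRO of type $II_{1,1}$, and nothing in your sketch identifies $Vf$ with a von Neumann algebra. That identification cannot be finessed: whether type $II_{1,1}$ W*-TROs have such structure is precisely what this paper lists among its open questions. The missing idea is to apply comparison theory in $R_V$ \emph{before} unfolding: for type $II_{1,\infty}$, $e$ is finite, $e^\perp$ is properly infinite, both have full central support, and $R_V$ is $\sigma$-finite, so $e\prec e^\perp$; a partial isometry $u\in V^*$ with $u^*u=e$ and $uu^*=g\le e^\perp$ makes $x\mapsto ux$ a TRO-isomorphism of $V$ onto $gN(V)$, a corner of an honest von Neumann algebra with $g$ finite. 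Only then does the row unfolding (applied to $gN(V)$ over the type $II_1$ algebra $gN(V)g$) yield $V\simeq B(H,\mathbb{C})\overline\otimes M$ with $M=gN(V)g$. You correctly flagged (iii) as the main obstacle, but the repair is this comparison step, not merely more care with the tensor splitting.
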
 

Because taking a transpose is a triple isomorphism, we have the following consequence of Theorem~\ref{thm:0316161}(iii).
\begin{lemma}\label{lem:0316161}
A W*-TRO of type $II_{1,\infty}$ is triple isomorphic to a W*-TRO of type $II_{\infty,1}$. More precisely, 
$B(H,\IC)\overline\otimes M$ is triple isomorphic to $B(\IC,H)\overline\otimes M^t$, where $x^t=Jx^*J$, for $x\in M\subset B(H)$ and $J$ is a conjugation on $H$.
\end{lemma}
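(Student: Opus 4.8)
The plan is to produce the required isomorphism as the (suitably interpreted) transpose map and to verify on the concrete model of Theorem~\ref{thm:0316161}(iii) that it carries $B(H,\IC)\overline\otimes M$ onto $B(\IC,H)\overline\otimes M^t$. For a W*-TRO $V\subset B(\mathcal K,\mathcal H)$ together with conjugations $J_{\mathcal H}$ on $\mathcal H$ and $J_{\mathcal K}$ on $\mathcal K$, I would set $\Phi(x)=J_{\mathcal K}x^*J_{\mathcal H}\in B(\mathcal H,\mathcal K)$ and first establish the general fact the text alludes to: $\Phi$ is a weak*-continuous linear triple isomorphism of $V$ onto the W*-TRO $V^t:=\Phi(V)$.

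This general fact is the conceptual core, but it is short. Linearity of $\Phi$ in $x$ is immediate, since the conjugate-linearity of the two flanking conjugations cancels that of $x\mapsto x^*$; bijectivity holds because $\Phi$ is, up to the fixed conjugations, its own inverse; and weak*-continuity is standard, as both $x\mapsto x^*$ and conjugation by a fixed $J$ are weak*-continuous, so $V^t$ is again weak*-closed. The one genuine computation is preservation of the triple product: using $(J_{\mathcal K}y^*J_{\mathcal H})^*=J_{\mathcal H}yJ_{\mathcal K}$ together with $J_{\mathcal H}^2=1$ and $J_{\mathcal K}^2=1$, I would expand $\{\Phi(x)\Phi(y)\Phi(z)\}=\tfrac12\bigl(\Phi(x)\Phi(y)^*\Phi(z)+\Phi(z)\Phi(y)^*\Phi(x)\bigr)$, let the internal $J^2$ factors collapse, and read off $J_{\mathcal K}\,\tfrac12(x^*yz^*+z^*yx^*)\,J_{\mathcal H}=\Phi(\{xyz\})$.

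I would then apply $\Phi$ to the model $V=B(H,\IC)\overline\otimes M$ of Theorem~\ref{thm:0316161}(iii), where $M$ acts on a Hilbert space $H_M$ with conjugation $J$, so that $V$ acts between $\mathcal K=H\otimes H_M$ and $\mathcal H=\IC\otimes H_M\cong H_M$. Choosing the conjugation $J_H\otimes J$ on $\mathcal K$ and $J$ on $\mathcal H$, the transpose distributes over the tensor factors, $\Phi(a\otimes m)=a^t\otimes m^t$, with $a^t=J_H a^*J_\IC\in B(\IC,H)$ and $m^t=Jm^*J\in M^t$. Since $a\mapsto a^t$ maps the row space $B(H,\IC)$ onto the column space $B(\IC,H)$ and $m\mapsto m^t$ maps $M$ onto $M^t$, this yields $V^t=B(\IC,H)\overline\otimes M^t$, the ``more precise'' statement. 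To pin down the type I would compute the corner algebras directly: $M(V)=\overline{VV^*}^{sot}=\IC\overline\otimes M\cong M$ is of type $II_1$ while $N(V)=\overline{V^*V}^{sot}=B(H)\overline\otimes M$ is of type $II_\infty$; after transposing, $M(V^t)=B(H)\overline\otimes M^t$ is of type $II_\infty$ and $N(V^t)=\IC\overline\otimes M^t\cong M^t$ is of type $II_1$, where $M^t$ is again of type $II_1$ because $m\mapsto JmJ$ is a type-preserving conjugate-linear $*$-isomorphism of $M$ onto $M^t$. Hence the two indices are interchanged and $V^t$ is of type $II_{\infty,1}$.

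I expect the main obstacle to lie in this last step rather than in the triple-isomorphism property itself: one must check that the conjugation on the tensor-product Hilbert space can be taken to be a tensor product of conjugations, so that $\Phi$ genuinely factors as $a^t\otimes m^t$, and one must keep the four corner algebras straight to be sure the classification index really changes from $II_{1,\infty}$ to $II_{\infty,1}$. By contrast, verifying that the transpose is a triple isomorphism is the short routine computation indicated above.
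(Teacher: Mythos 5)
Your proposal is correct and follows exactly the paper's route: the paper offers no detailed proof, justifying the lemma with the single observation that ``taking a transpose is a triple isomorphism'' applied to the model $B(H,\IC)\overline\otimes M$ of Theorem~\ref{thm:0316161}(iii), which is precisely what you verify in detail (linearity and triple-multiplicativity of $x\mapsto Jx^*J$, factorization over the tensor product, and the interchange of the corner algebras $M(V)$ and $N(V)$). Your write-up is in fact slightly more careful than the paper's statement, since you distinguish the Hilbert space carrying $M$ from the row-space parameter $H$ and check that the conjugation on the tensor product can be chosen as a tensor product of conjugations.
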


\begin{proposition}\label{thm:0316162}
Let $V$ be a W*-TRO.
\begin{description}
\item[(i)]
If $V$ acts on a separable Hilbert space and  is  of one of the types $I_{\infty,\infty}, II_{\infty,\infty}$ or $III$,  then every triple derivation of $V$ is an inner triple derivation and every TRO derivation of $V$ is an inner TRO-derivation.
\item[(ii)] If every TRO-derivation of any W$^*$-TRO of type $II_{1,\infty}$ has only inner TRO-derivations, then every TRO-derivation of any W$^*$-TRO of type $II_{\infty,1}$ has only inner TRO-derivations. The converse also holds.
\end{description}
\end{proposition}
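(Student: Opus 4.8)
The plan is to reduce both parts to Theorem~\ref{thm:0228161} (every triple derivation of a von Neumann algebra is inner) together with the observation that the property ``every triple derivation is inner'' is an invariant of triple isomorphism. I would first record this invariance explicitly: if $\phi\colon X\to Y$ is a surjective triple isomorphism of JC$^*$-triples, then $D\mapsto \phi D\phi^{-1}$ is a bijection from the triple derivations of $X$ onto those of $Y$, and, because $\phi$ preserves $\{xyz\}$, it carries the inner triple derivation $\delta(a,b)$ of $X$ to $\delta(\phi a,\phi b)$ on $Y$; hence $X$ has only inner triple derivations if and only if $Y$ does. By Proposition~\ref{prop:0307161}(ii)--(iv) the word ``triple'' may be replaced by ``TRO'' throughout, so it suffices to track triple derivations and invoke Proposition~\ref{prop:0307161}(iv) at the end.

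For part (i), I would apply Theorem~\ref{thm:0316161}(ii): a W$^*$-TRO $V$ acting on separable Hilbert spaces of type $I_{\infty,\infty}$, $II_{\infty,\infty}$ or $III$ is stable, hence TRO-isomorphic, in particular triple isomorphic, to a von Neumann algebra $M$. By Theorem~\ref{thm:0228161} every triple derivation of $M$ is inner, so by the invariance above every triple derivation of $V$ is inner; Proposition~\ref{prop:0307161}(iv) then delivers the TRO-derivation statement, giving both conclusions of~(i). (Alternatively the TRO half follows at once from Corollary~\ref{cor:0307161}.)

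For part (ii), the engine is Lemma~\ref{lem:0316161}, which realizes the transpose $x\mapsto Jx^*J$ as a triple isomorphism between type $II_{1,\infty}$ and type $II_{\infty,1}$ W$^*$-TROs. By Theorem~\ref{thm:0316161}(iii) an arbitrary type $II_{1,\infty}$ W$^*$-TRO is TRO-isomorphic to $B(H,\IC)\overline\otimes M$ with $M$ of type $II_1$, which Lemma~\ref{lem:0316161} shows is triple isomorphic to $B(\IC,H)\overline\otimes M^t$, a type $II_{\infty,1}$ W$^*$-TRO (with $M^t$ again of type $II_1$); applying the same transpose to the canonical form $B(\IC,H)\overline\otimes N$ of a type $II_{\infty,1}$ W$^*$-TRO produces $B(H,\IC)\overline\otimes N^t$ of type $II_{1,\infty}$. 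Thus the two type classes are matched up by triple isomorphisms in both directions, and the biconditional in~(ii) is immediate from the triple-isomorphism invariance recorded in the first paragraph.

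The only step requiring genuine care is that invariance step, and specifically its innerness clause $\phi\,\delta(a,b)\,\phi^{-1}=\delta(\phi a,\phi b)$: it is exactly this identity that lets part~(ii) go through even though the transpose is merely a triple isomorphism and not a TRO-isomorphism, which is also why Proposition~\ref{prop:0307161}(iv) is indispensable for passing between the triple and TRO formulations. One should additionally verify the bookkeeping that the transpose preserves the type (so that $II_1$ goes to $II_1$, while $M(V)$ and $N(V)$ are interchanged), ensuring the matched W$^*$-TRO genuinely lands in the conjugate type class.
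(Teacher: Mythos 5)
Your proof is correct and follows essentially the same route as the paper, whose entire proof is a one-line citation: part (i) is declared an immediate consequence of Theorem~\ref{thm:0316161}(ii), Theorem~\ref{thm:0228161} and Proposition~\ref{prop:0307161}(iv), and part (ii) of Lemma~\ref{lem:0316161} and Proposition~\ref{prop:0307161}(iv). The invariance step you spell out --- that $\phi\,\delta(a,b)\,\phi^{-1}=\delta(\phi a,\phi b)$ under a triple isomorphism $\phi$, so having only inner triple derivations is a triple-isomorphism invariant --- is exactly the content the paper leaves implicit in the word ``immediate.''
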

\begin{proof}
(i) is an immediate consequence of Theorem~\ref{thm:0316161}(ii), Theorem~\ref{thm:0228161} and Proposition~\ref{prop:0307161}(iv).
(ii) is an immediate consequence of  Lemma~\ref{lem:0316161} and Proposition~\ref{prop:0307161}(iv).
\end{proof}

It follows from Remark~\ref{rem:3.5}  that if $M$ is  a von Neumann algebra of type $II_\infty$ or III and
$H$ is a separable Hilbert space, then $B(\IC,H)\overline\otimes M$ is triple isomorphic to a von Neumann algebra and hence has only inner TRO-derivations, giving alternate proofs of parts of Proposition~\ref{thm:0316162}(i).\smallskip

  By \cite[Theorem 4.1]{Ruan04}, if $V$ is a  W*-TRO of  type I, then $V$ is TRO-isomorphic to $\oplus_\alpha L^\infty(\Omega_\alpha)\overline\otimes B(K_\alpha,H_\alpha)$.  In the next two results, we consider the related TRO $C(\Omega,B(H,K))$, where $\Omega$ is a compact Hausdorff space.

\begin{lemma}\label{lem:0726171}
Let $E$ be a TRO and $\Omega$ a compact Hausdorff space.  
\begin{description}
\item[(i)] If every TRO derivation of $V:=C(\Omega,E)$ is an inner TRO derivation, then the same holds for $E$.
\item[(ii)] If every triple derivation of $V:=C(\Omega,E)$ is an inner triple derivation, then the same holds for $E$.
\end{description}
\end{lemma}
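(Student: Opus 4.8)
The plan is to reduce innerness for $E$ to the assumed innerness for $V=C(\Omega,E)$ by lifting a derivation of $E$ to $V$ pointwise and then recovering a derivation of $E$ by evaluation at a single point. The key structural observation is that the triple product and the involution on $V$ are computed pointwise, $(fg^*h)(\omega)=f(\omega)g(\omega)^*h(\omega)$ and $(f^*)(\omega)=f(\omega)^*$. Accordingly, given any TRO-derivation $D$ of $E$, I would define $\widetilde D\colon V\to V$ by $\widetilde D(f)(\omega)=D(f(\omega))$. Since $D$ is a triple derivation of the JC$^*$-triple $E$ (Proposition~\ref{prop:0307161}(ii)), it is bounded by the theorem of Barton and Friedman, so $\omega\mapsto D(f(\omega))$ is again continuous and $\|\widetilde D(f)\|\le \|D\|\,\|f\|$; thus $\widetilde D\in B(V)$. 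The pointwise form of the triple product then shows immediately that $\widetilde D$ is a TRO-derivation of $V$.

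For part (i), the hypothesis provides skew-hermitian $\alpha\in VV^*$ and $\beta\in V^*V$ with $\widetilde D(f)=\alpha f+f\beta$ for all $f\in V$. Writing $\alpha=\sum_i f_ig_i^*$ and $\beta=\sum_j h_j^*k_j$ with $f_i,g_i,h_j,k_j\in V$, I would fix any point $\omega_0\in\Omega$ and set $\alpha_0=\alpha(\omega_0)=\sum_i f_i(\omega_0)g_i(\omega_0)^*\in EE^*$ and $\beta_0=\beta(\omega_0)=\sum_j h_j(\omega_0)^*k_j(\omega_0)\in E^*E$; because the involution is pointwise, $\alpha_0$ and $\beta_0$ are again skew-hermitian. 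Applying $\widetilde D$ to the constant function $\hat x\colon \omega\mapsto x$ (for $x\in E$) gives $\widetilde D(\hat x)=\widehat{D(x)}$ on the one hand and $\alpha\hat x+\hat x\beta$ on the other; evaluating both at $\omega_0$ yields $D(x)=\alpha_0 x+x\beta_0$. Hence $D$ is an inner TRO-derivation of $E$.

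Part (ii) is carried out in exactly the same way, replacing TRO-derivation by triple derivation throughout: a triple derivation $\delta$ of $E$ lifts to the triple derivation $\widetilde\delta(f)(\omega)=\delta(f(\omega))$ of $V$ (the Jordan triple product on $V$ is also pointwise), the hypothesis expresses $\widetilde\delta=\sum_{k}\delta(a_k,b_k)$ with $a_k,b_k\in V$, and evaluation of the identity $\widetilde\delta(\hat x)=\widehat{\delta(x)}$ at $\omega_0$ gives $\delta(x)=\sum_k\delta\big(a_k(\omega_0),b_k(\omega_0)\big)(x)$, an inner triple derivation of $E$. The only point requiring genuine care---and the step I expect to be the crux---is the well-definedness of the lift: it relies on the automatic boundedness of derivations of JC$^*$-triples to keep $\omega\mapsto D(f(\omega))$ continuous and $\widetilde D$ bounded. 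Once that is in place, the pointwise nature of the product, the involution, and the point evaluation renders both directions of the correspondence purely formal.
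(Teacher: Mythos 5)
Your proof is correct and takes essentially the same route as the paper: lift a derivation of $E$ pointwise to $V=C(\Omega,E)$, invoke the innerness hypothesis on $V$, and evaluate the resulting identity at constant functions to produce skew-hermitian elements of $EE^*$ and $E^*E$. The only cosmetic differences are that the paper deduces (ii) from (i) via Proposition~\ref{prop:0307161}(iv) rather than repeating the pointwise argument for triple derivations as you do, and that your appeal to Barton--Friedman is not strictly needed since TRO-derivations are bounded by definition in this paper.
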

\begin{proof}
By Proposition~\ref{prop:0307161}(iv), it is sufficient to prove (i).

If $D$ is a  TRO derivation of $E$, then $\delta f(\omega):=D(f(\omega))$ is a  TRO derivation of $V$, as is easily checked. Suppose every TRO derivation of $V$ is an inner TRO derivation.  Then $\delta f=\alpha f+f\beta$, where $\alpha=-\alpha^*=\sum_i x_iy_i^*$ for some $ x_i,y_i\in V$, and $\beta=-\beta^*=\sum_i z_j^*w_j$ for some $z_j,w_j\in V$.

For $a\in E$, let $1\otimes a\in V$ be the constant function equal to $a$. Then $D(a)=D((1\otimes a)(\omega))=\delta(1\otimes a)(\omega)=\alpha(\omega)a+a\beta(\omega)$ for all $\omega\in\Omega$.
Since $\alpha(\omega)^*=-\alpha(\omega)\in EE^*$ and $\beta(\omega)^*=-\beta(\omega)\in E^*E$, $D$ is an inner TRO derivation of $E$.
\end{proof}

Recall from Theorem~\ref{prop:0307162}(iii) that the TRO $B(H,K)$  supports outer TRO derivations if and only if it is infinite dimensional and $\dim H\ne \dim K$.

\begin{proposition}\label{prop:0726171}
If $V=\oplus_\alpha C(\Omega_\alpha,E_\alpha)$, where $E_\alpha=B(K_\alpha,H_\alpha)$ and if every triple derivation of $V$ is an inner triple derivation, then for every $\alpha$, either
$\dim E_\alpha<\infty$ or $\dim K_\alpha=\dim H_\alpha$.
\end{proposition}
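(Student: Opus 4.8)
The plan is to prove the contrapositive statement: if some summand violates the stated dichotomy, then $V$ supports an outer triple derivation, contradicting the hypothesis. So suppose, toward a contradiction, that there is an index $\alpha_0$ with $\dim E_{\alpha_0}=\infty$ and $\dim K_{\alpha_0}\neq \dim H_{\alpha_0}$; the goal is to manufacture an outer triple derivation of $V$. Note at the outset that $E_{\alpha_0}=B(K_{\alpha_0},H_{\alpha_0})$ has domain $K_{\alpha_0}$ and range $H_{\alpha_0}$, so our assumption is precisely the negation of the dichotomy, and by the recollection preceding the statement (that is, Theorem~\ref{prop:0307162}(iii)) it says exactly that $E_{\alpha_0}$ admits an \emph{outer} TRO derivation.

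First I would reduce from the direct sum to the single factor $C(\Omega_{\alpha_0},E_{\alpha_0})$. The idea is that a triple derivation $D_0$ of $C(\Omega_{\alpha_0},E_{\alpha_0})$ extends to a triple derivation $\tilde D$ of $V$ by letting $\tilde D$ act as $D_0$ on the $\alpha_0$-coordinate and as $0$ on every other coordinate; since the triple product of $V$ is computed coordinatewise, $\tilde D$ satisfies the triple derivation identity, and it is bounded with the same norm as $D_0$. The key half of the reduction is the converse: if $\tilde D$ were inner on $V$, say $\tilde D=\sum_k\delta(a_k,b_k)$ with $a_k,b_k\in V$, then evaluating at elements supported in the $\alpha_0$-coordinate and reading off the $\alpha_0$-component yields $D_0=\sum_k\delta(a_{k,\alpha_0},b_{k,\alpha_0})$, so $D_0$ would be inner on $C(\Omega_{\alpha_0},E_{\alpha_0})$. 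Thus it suffices to exhibit an outer triple derivation of the single factor $C(\Omega_{\alpha_0},E_{\alpha_0})$.

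For that I would run the earlier equivalences in reverse. Having observed that $E_{\alpha_0}$ admits an outer TRO derivation, the contrapositive of Proposition~\ref{prop:0307161}(iv) gives that $E_{\alpha_0}$ admits an outer \emph{triple} derivation. The contrapositive of Lemma~\ref{lem:0726171}(ii) then promotes this to an outer triple derivation of $C(\Omega_{\alpha_0},E_{\alpha_0})$, and the reduction of the previous paragraph lifts it to an outer triple derivation of $V$. This contradicts the assumption that every triple derivation of $V$ is inner, so no such $\alpha_0$ can exist, which is the asserted dichotomy.

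The only genuinely delicate point is the direct-sum reduction in the second paragraph: one must verify both that extension-by-zero respects the coordinatewise triple product (routine) and, more importantly, that an inner representation of $\tilde D$ on the whole sum restricts coordinatewise to an inner representation of $D_0$ on the single factor. Everything else is bookkeeping with the already-established equivalences, the main care being to line up the dimension conventions, so that the criterion ``infinite dimensional and unequal dimensions'' of Theorem~\ref{prop:0307162}(iii) reads exactly as $\dim E_{\alpha_0}=\infty$ together with $\dim K_{\alpha_0}\neq\dim H_{\alpha_0}$.
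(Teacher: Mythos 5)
Your proposal is correct and follows essentially the same route as the paper's own proof: both arguments show that an inner triple derivation of $V$ restricts coordinatewise to an inner triple derivation of each summand $C(\Omega_\alpha,E_\alpha)$, then obtain an outer triple derivation of $C(\Omega_{\alpha_0},E_{\alpha_0})$ from Theorem~\ref{prop:0307162}(iii) via Proposition~\ref{prop:0307161}(iv) and the contrapositive of Lemma~\ref{lem:0726171}(ii), and finally extend it by zero to $V$ to reach a contradiction. Your write-up merely makes explicit the chain of citations that the paper compresses into ``as noted above,'' which is a fair reading of the intended argument.
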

\begin{proof}
Let $\delta$ be a triple derivation of $V$, and let $\delta_\alpha=\delta|_{C(\Omega_\alpha,E_\alpha)}$, which is a triple derivation of the weak*-closed ideal $C(\Omega_\alpha,E_\alpha)$. Then $\delta(\{f_\alpha\})=\{\delta_\alpha f_\alpha\}$.  Moreover if $\delta$ is an inner triple derivation, say $\delta=\sum_i\delta(a^i,b^i)$ for $a^i=\{a_\alpha^i\},b^i=\{b_\alpha^i\}\in  V$, then 
$\delta_\alpha=\sum_i\delta(a^i_\alpha,b^i_\alpha)$ is an inner triple derivation of $C(\Omega_\alpha,E_\alpha)$.

Now suppose that every  triple derivation of $V$ is an inner triple derivation, and that for some $\alpha_0$, $E_{\alpha_0}$ is infinite dimensional and $\dim K_{\alpha_0}$ does not equal $\dim H_{\alpha_0}$.  Then, as noted above,  there is an outer triple derivation $D$ of $C(\Omega_{\alpha_0},E_{\alpha_0})$.   Then the triple derivation on $V$ which is zero on $C(\Omega_\alpha,E_\alpha)$ for $\alpha\ne \alpha_0$ and equal to $D$ on 
$C(\Omega_{\alpha_0},E_{\alpha_0})$, cannot be inner by the preceding paragraph, which is a contradiction.
\end{proof}

\section{Some questions left open}

\begin{questions}
It remains to complete the results of Theorem~\ref{prop:0307162} to include the cases where $p$ is a finite projection in a continuous von Neumann algebra, or when $p$ is arbitrary and $M$ is a general von Neumann algebra of type I.
As a possible tool for the first question, we note that there is an alternate proof of    Proposition~\ref{prop:0307161} (ii), in the case $X=pM$, $p$ finite, using the technique in  \cite[Section II.B]{HKPR16}.
\end{questions}

\begin{questions}
Besides the problem of extending the known cases to non separable Hilbert spaces, the cases left open  in Proposition~\ref{thm:0316162} for arbitrary W*-TROs are those of types $II_{1,1}$ and  $II_{1,\infty}$ (the latter being equivalent to $II_{\infty,1}$).
\end{questions}

\begin{questions}\label{questions 3}
Let $E$ be a W*-TRO, and let  $V=\oplus_\alpha L^\infty(\Omega_\alpha)\overline\otimes B(K_\alpha,H_\alpha)$ be a W*-TRO of type~I.

\begin{itemize}
\item If  every derivation of the W*-TRO $L^\infty(\Omega)\overline\otimes E$ is inner, does it follow that every derivation of $E$ is inner? 
\item If every derivation of 
$V$ is inner, does it follow that  $\dim B(K_\alpha,H_\alpha)<\infty$, for all $\alpha$; or $\sup_\alpha \dim B(K_\alpha,H_\alpha)<\infty$?
\item If $\sup_\alpha \dim B(K_\alpha,H_\alpha)<\infty$, does it follow that every derivation of 
$V$ is inner?
\end{itemize}
\end{questions}

\begin{remark}With respect to Questions 3,\smallskip

(i) In the first bullet, if $E$ had a separable predual, then a variant of \cite[1.22.13]{sakai} would state that $L^\infty(\Omega)\overline\otimes E=L^\infty(\Omega,E)$ and the technique in Lemma~\ref{lem:0726171} could be used.\smallskip

(ii)
In the first bullet, suppose that $E=pM$, with $M$ a von Neumann algebra in $B(H)$ and $p$ a projection in $M$, and let $D$ is a derivation of $E$.  Then $\delta:=id\otimes D$ is a derivation of $V=L^\infty(\Omega)\overline\otimes E$.  Assuming that $\delta$ is inner, there exist $\alpha=-\alpha^*\in VV^*=L^\infty(\Omega)\overline\otimes (\overline{EE^*})$ ($\overline{EE^*}$ denoting the weak closure) and $\beta\in V^*V=L^\infty(\Omega)\overline\otimes (\overline{E^*E})$, such that 
\[
1\otimes Dx=\alpha(1\otimes x)+(1\otimes x)\beta,\quad (x\in E).
\] 
We have $EE^*=pMp\subset B(pH)$, $\overline{E^*E}\subset B(H)$, and $L^\infty(\Omega)\subset B(L^2(\Omega))$.  For each $\varphi\in B(L^2(\Omega))_*$, let $R_\varphi:B(L^2(\Omega)\otimes pH)\rightarrow B(pH)$ be the slice map of Tomiyama defined by $R_\varphi(f\otimes x)=\varphi(f)x$ (\cite[Lemma 7.2.2]{EffRua00}).

Since $VV^*$ is the ultraweak closure of $L^\infty(\Omega)\otimes EE^*$, by the weak*-continuity of $R_\varphi$, we have 
\[
\varphi(1)Dx=R_\varphi(\alpha)x+xR_{\varphi}(\beta)
\]
with $R_\varphi (\alpha)\in EE^*$ and  $R_{\varphi}(\beta)\in \overline{E^*E}$. Thus, if $\dim H=\dim pH$, or if $E$ is finite dimensional, then  $R_{\varphi}(\beta)\in E^*E$, so that $D$ is an inner TRO-derivation, (take $\varphi$ to be a normal state so that $R_\varphi$ is self-adjoint and $R_\varphi(\alpha)^*=-R_\varphi(\alpha)$ and $R_\varphi(\beta)^*=-R_\varphi(\beta)$.)
In general, $D$ could be called a 
``quasi-inner'' TRO-derivation.\smallskip

(iii) In the second bullet, if each $B(K_\alpha,H_\alpha)$ had a separable predual, then a variant of \cite[1.22.13]{sakai} would state that $L^\infty(\Omega_\alpha)\overline\otimes B(K_\alpha,H_\alpha)=L^\infty(\Omega_\alpha,B(K_\alpha,H_\alpha))$ and the technique in Proposition~\ref{prop:0726171} could be used.
\end{remark}

\begin{bibdiv}
\begin{biblist}



\bib{BarFri90}{article}{
   author={Barton, T.\ J.},
   author={Y. Friedman},
   title={Bounded derivations of ${\rm JB}^*$-triples},
   journal={Quart. J. Math. Oxford Ser. (2)},
   date={1990},
   pages={255--268},
  }
  
  \bib{Batty78}{article}{
   author={Batty, C.\ J.\ K.},
   title={Derivations of tensor products of C*-algebras},
   journal={J. London Math. Soc. (2)},
   volume={17},
    date={1978},
     pages={129--140},
  }

  \bib{Chubook}{book}{
   author={Chu, Cho-Ho},
      title={Jordan structures in geometry and analysis},
   series={Cambridge Tracts in Mathematics},
   volume={190},
   publisher={Cambridge University Press, Cambridge},
    date={2012},
     pages={x+261 pp},
  }

  \bib{EffRua00}{book}{
   author={E. G. Effros},
   author={Z. J. Ruan},
   title={Operator Spaces},
   series={London Mathematical Society Monographs},
   volume={23},
   publisher={Clarendon Press Oxford},
    date={2000},
     pages={xvi+363 pp},
  }

\bib{Hamana99}{article}{
   author={M.\ Hamana}, 
      title={Triple envelopes and Silov boundaries of operator spaces},
   journal={Math.\ J.\ Toyama Univ.},
   volume={22},
    date={1999},
     pages={77--93},
  }

\bib{HKPR16}{article}{
   author={J.\  Hamhalter}, 
   author={K.\ K.\  Kudaybergenov}, 
   author={A.\ M.\  Peralta},
   author={B.\  Russo},
   title={Boundedness of completely additive measures with application to 2-local triple derivations},
   journal={J.\  Math.\ Physics},
   volume={57},
    date={2016},
    number={2},
     pages={22 pp},
  }

\bib{HoMarPeRu}{article}{
   author={T.\  Ho}, 
   author={J.\  Mart\'inez-Moreno}, 
   author={A.\ M.\  Peralta},
   author={B.\  Russo},
   title={Derivations on real and complex JB$^\ast$-triples},
   journal={J.\  London Math.\  Soc. (2)},
   volume={65},
    date={2002},
    number={1},
     pages={85--102},
  }
 
\bib{HoPerRus13}{article}{
   author={T.\  Ho}, 
   author={A.\ M.\  Peralta},
   author={B.\  Russo},
   title={Ternary weakly amenable C*-algebras and JB*-triples},
   journal={Quarterly J. Math.},
   volume={64},
    date={2013},
     pages={1109--1139},
  }

\bib{Horn88}{article}{
   author={G.\ Horn}, 
   author={E.\  Neher},
   title={Classification of continuous JBW$^\ast$-triples},
   journal={Trans.\ Amer.\
Math.\  Soc.},
   volume={306},
    date={1988},
     pages={553--578},
  }

\bib{KaurRuan02}{article}{
   author={Kaur, Manmohan},
   author={Ruan, Zhong-Jin},
   title={Local properties of ternary rings of operators and their linking C*-algebras},
   journal={J. Functional Analysis},
   volume={195},
    date={2002},
     pages={262--305}
  }

 \bib{Meyberg72}{book}{
   author={Meyberg, Kurt},
   title={Lectures on algebras and triple systems},
   publisher={The University of Virginia, Charlottesville, Va.},
    date={1972},
   pages={v+226 pp}
  }

\bib{PluRus15}{article}{
   author={Pluta, Robert},
   author={Russo, Bernard},
   title={Triple derivations on von Neumann algebras},
   journal={Studia Math.},
   volume={226},
    date={2015},
    number={1},
     pages={57-73}
  }

 \bib{Ruan04}{article}{
   author={Ruan, Zhong-Jin},
   title={Type decomposition and the rectangular AFD property for W$^*$-TROs},
   journal={Canad. J. Math.},
   volume={36},
    date={2004},
    number={4},
     pages={843--870}
  }
     
      \bib{RussoAlmeria}{article}{
   author={Russo, Bernard},
   title={Derivations and projections on Jordan triples: an introduction to nonassociative algebra, continuous cohomology, and quantum functional analysis},
   journal={In: Advanced courses of mathematical analysis V},
   note={World Sci. Publ., Hackensack, NJ},
    date={2016},
     pages={118--227}
  }

  \bib{sakai}{book}{
   author={S. Sakai},
   title={C*-algebras and W*-algebras},
   series={Ergebnisse der Mathematik und ihrer Grenzgebiete},
   volume={60},
   publisher={Springer-Verlag, New York Heidelberg Berlin},
    date={1971},
     pages={xii+256 pp},
  }

   \bib{Sinclair70}{article}{
   author={Sinclair, A.\ M.},
   title={Jordan homomorphisms and derivations on semisimple Banach algebras},
   journal={Proc. Amer. Math. Soc.},
   volume={24},
    date={1970},
     pages={209--214}
  }

 \bib{Timmermann2000}{article}{
   author={Timmermann, W.},
   title={Remarks on automorphism and derivation pairs in ternary rings of unbounded operators},
   journal={Arch. Math. (Basel)},
   volume={74},
    date={2000},
    number={5},
     pages={379--384}
  }

  \bib{Zalar95}{article}{
   author={Zalar, Borut},
   title={On the structure of automorphism and derivation pairs of $B^\ast$-triple systems},
   journal={Topics in operator theory, operator algebras and applications (Timisoara 1994), Rom. Acad., Bucharest},
    date={1995},
     pages={265--271}
  }

\end{biblist}
\end{bibdiv}

\end{document}